\def\squarebox#1{\hbox to #1{\hfill\vbox to #1{\vfill}}}
\newcommand{\qed}{\hspace*{\fill}
\vbox{\hrule\hbox{\vrule\squarebox{.667em}\vrule}\hrule}\smallskip}
\newtheorem{theorem}{Theorem}[section]
\newtheorem{lemma}[theorem]{Lemma}
\newtheorem{corollary}[theorem]{Corollary}
\newtheorem{proposition}[theorem]{Proposition}
\newtheorem{example}[theorem]{Example}
\newenvironment{proof}{\noindent {\bf Proof:}}{\hfill $\qed $ \newline}
\newcommand{\R}{{\mathbb R}}
\newcommand{\N}{{\mathbb N}}
\newcommand{\Z}{{\mathbb Z}}
\newcommand{\F}{\mathbb{F}}
\renewcommand{\t}{{\mathbb T}}
\newcommand{\simto}{\stackrel{\sim}{\longrightarrow}}
\newcommand{\ad}{{\rm ad}}
\newcommand{\Ad}{{\rm Ad}}
\newcommand{\Sl}{{\rm Sl}}
\newcommand{\SO}{{\rm SO}}
\newcommand{\cl}{\mathrm{cl}}
\newcommand{\e}{{\rm e}}
\renewcommand{\min}{\mbox{{\rm min}}}
\newcommand{\g}{\mathfrak{g}}
\renewcommand{\k}{\mathfrak{k}}
\newcommand{\s}{\mathfrak{s}}
\renewcommand{\a}{\mathfrak{a}}
\newcommand{\m}{\mathfrak{m}}
\renewcommand{\v}{\mathfrak{v}}
\renewcommand{\r}{\mathfrak{r}}
\newcommand{\n}{\mathfrak{n}}
\renewcommand{\l}{\mathfrak{l}}
\newcommand{\q}{\mathfrak{q}}
\newcommand{\uniaodisj}{{\coprod}}
\def\prodi#1{\left\langle{#1}\right\rangle}
\begin{document}
\thispagestyle{empty}

\title{Dynamics of translations on \\ maximal compact subgroups}

\author{Mauro Patrão \and Ricardo Sandoval}

\maketitle

\begin{abstract}
In this article, we study the dynamics of translations of an element of a semisimple Lie group $G$ acting on its maximal compact subgroup $K$. First, we extend to our context some classical results in the context of general flag manifolds, showing that when the element is hyperbolic its dynamics is gradient and its fixed points components are given by some suitable right cosets of the centralizer of the element in $K$. Second, we consider the dynamics of a general element and characterizes its recurrent set, its minimal Morse components and their stable and unstable manifolds in terms of the Jordan decomposition of the element, and we show that each minimal Morse component is normally hyperbolic.
\end{abstract}

\vskip 0.5cm
\noindent
\textbf{Keywords:} \normalfont{Semisimple Lie groups, Gradient dynamics, Recurrence and chain recurrence, Morse decomposition, Normal hyperbolicity.}

\section{Introduction}

\setcounter{page}{1}

The dynamics of translations induced by an element of a connected semisimple Lie group acting on some of its compact homogeneous spaces is extensively studied in the literature. For example, the iteration of a linear isomorphism acting on the projective space is a classical topic, with applications to the study of skew product flows on projective bundles (Selgrade Theorem, see \cite{sel}), even though some interesting results, such as the normal hyperbolicity of the minimal Morse components on the projective space, were established only recently, using techniques from Lie groups. More generally, the dynamics of translations of an element of a linear connected semisimple Lie group acting on a generalized flag manifold is reasonably understood, and there are applications of these results to semigroup theory which allows us to obtain the full generalization of Selgrade Theorem to skew product flows on generalized flag bundles (see \cite{psm}). This broader context of generalized flag manifolds encompasses other interesting cases such as the classical flag manifolds of real or complex nested subspaces and also symplectic grassmanians, which were extensively studied in the literature (see for example \cite{ammar, ayala, batterson, dkv, ferraiol, kleinsteuber, hermann, psm, pss, sel, shub}). Despite this, there are many interesting dynamics of translations on compact homogeneous spaces of a connected semisimple Lie group which are not generalized flag manifolds, such as spheres or grassmanians of oriented subspaces, in the case of the special linear group. But all these examples of dynamics are projections of dynamics of translations on the respective maximal compact subgroups.

In this article, we study the dynamics of translations $g^t$ acting on a maximal compact subgroup $K$ of a connected semisimple Lie group $G$ with finite center. More precisely, when $\t = \R$, we have that $g^t = \exp(tX)$ and, when $\t = \Z$, we have that $g^t$ is the $t$-iterate of $g$. When $t=1$ we just write $g=g^1$. We look at $K$ as the homogeneous space $G/AN$, where $G = KAN$ is the assotiated Iwasawa decomposition. If $b = AN$ is the base point of $G/AN$, the the map $k \mapsto kb$ is a diffeomorphism from $K$ to $G/AN$. The action of $g^t$ on $K$ is given through the natural action of $G$ on $G/AN$.  Equivalently, this action is given by $(g^t,k) \mapsto \kappa(g^tk)$, where $\kappa: G \to K$ is the corresponding Iwasawa projection. In the case of the special linear group, this projection corresponds to applying the Gram-Schmidt process to the columns of the matrix, since the Iwasawa decomposition corresponds in this case to the QR decomposition.

The first main set of results of the present paper extends to the maximal compact subgroups the results proved in \cite{dkv}, concerning the dynamics of translations $h^t = \exp(tH)$, where $H \in \g$ is a hyperbolic element,  acting on the flag manifolds of $G$, showing that this is a gradient dynamics, and describing its fixed points components and respective stable manifolds. Observe that, differently from the flag manifold situation, in the maximal compact subgroup case, all the fixed points components are diffeomorphic, in fact, they are right cosets of a compact and connected subgroup.

\begin{theorem}\label{theorem1}
The dynamics of the hyperbolic translation $h^t$ is gradient on $K$.
\begin{enumerate}[$(i)$]

\item The connected components of the fixed points of $h^t$ are given by
\[
 {\rm fix} (H, u) = K_H^0 ub
\]
where $K_H^0$ is the connected component of the identity of the centralizer of $H$ in $K$ and $u$ is an element of the group $U$ of connected components of the normalizer of $A$ in $K$.

\item The set of fixed points is the disjoint union of connected components
\[
{\rm fix}(h^t) = \uniaodisj \left\{ {\rm fix}(H, u) : \, u \in U_H \backslash U \right\}
\]
where $U_H$ is the centrilizer of $H$ in $U$. The attractors are given by ${\rm fix} (H, c)$, where $c$ is an element of the subgroup $C$ of $U$, given by the connected components of the centralizer of $A$ in $K$.

\item The stable manifold of ${\rm fix} (H, u)$ is given by
\[
 {\rm st} (H, u) = N^-_HK_H^0 ub
\]
where $N^-_H$ is the subgroup generated by roots which have negative value at $H$.

\end{enumerate}

\end{theorem}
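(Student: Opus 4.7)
The plan is to work infinitesimally via the Iwasawa decomposition and then import the Morse--Bott structure from the maximal flag manifold $\F = K/M$. Using $\kappa(hk) = k\,\kappa(k^{-1} h k)$ for $h \in G$, $k \in K$, and a first-order expansion of $\kappa$ at the identity, one obtains that the vector field generating $h^t$ on $K$ is
\[
X_H(k) \;=\; (dL_k)_e\,\pi_\k\!\bigl(\Ad(k^{-1})H\bigr),
\]
where $\pi_\k:\g = \k\oplus\a\oplus\n \to \k$ denotes the projection along $\a\oplus\n$. Hence $k$ is a fixed point if and only if $\Ad(k^{-1}) H \in \a \oplus \n$; since $\p$ is $\Ad(K)$-invariant and $\p \cap (\a \oplus \n) = \a$ (a short Cartan-involution check using $\theta(\g_\alpha) = \g_{-\alpha}$), the condition simplifies to $\Ad(k^{-1}) H \in \a$.

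For $(i)$ and the fixed-point decomposition in $(ii)$, note that such $\Ad(k^{-1}) H \in \a$ must lie in the $W$-orbit of $H$, say $\Ad(k^{-1}) H = \Ad(u) H$ for some $u \in M^* = N_K(A)$. This is equivalent to $ku \in K \cap Z_G(H) = K_H$, i.e.\ $k \in K_H u^{-1}$; conversely every such $k$ is a fixed point. Passing to connected components yields ${\rm fix}(H,u) = K_H^0 u b$, and the parameterization by $U_H \backslash U$ follows because $K_H^0 u_1 = K_H^0 u_2$ precisely when $u_1 u_2^{-1}$ represents a class in $U_H$. The identification of the attractors in $(ii)$ is deferred until after the gradient structure is established.

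For the gradient claim, I lift the Duistermaat--Kolk--Varadarajan height function $f_\F$ from $\F = K/M$. On $\F$, $f_\F$ is Morse--Bott and $-\nabla f_\F$ generates the induced flow with the critical components and stable manifolds described in \cite{dkv}. Since the projection $\pi:K \to \F$ is a principal $M$-bundle whose fibers are tangent to $\pi^{-1}({\rm fix}_\F) = {\rm fix}(h^t)$, the pullback $\tilde f := f_\F \circ \pi$ is Morse--Bott on $K$ with the same normal Hessian signatures. Choosing on $K$ a bi-invariant metric whose horizontal distribution makes $\pi$ a Riemannian submersion onto the metric used in \cite{dkv}, one checks $-\nabla \tilde f = X_H$, establishing the gradient assertion. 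The attractors are then the minima of $\tilde f$, which by \cite{dkv} correspond exactly to those $u \in U$ whose class in $U/C \cong W$ is trivial, i.e.\ $u \in C$, completing $(ii)$.

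For $(iii)$, a direct Iwasawa computation establishes the containment $N_H^- K_H^0 u b \subseteq {\rm st}(H,u)$: if $k_0 \in K_H^0 u$ then $k_0 u^{-1} \in K_H^0 \subseteq Z_K(H)$ commutes with $h^t$, so $h^t k_0 \in k_0 A$ and $\kappa(h^t k_0) = k_0$; for $n^- \in N_H^-$ the conjugates $h^t n^- h^{-t}$ tend to $e$ as $t \to +\infty$ (all roots of $\n_H^-$ being negative on $H$), so
\[
\kappa(h^t n^- k_0) \;=\; \kappa\!\bigl((h^t n^- h^{-t})\, k_0\, a_t\bigr) \;\longrightarrow\; k_0.
\]
Equality then follows either by lifting the stable-manifold decomposition from $\F$, or directly from a Bruhat-like partition $K = \bigcup_u N_H^- K_H^0 u$ together with the dimension count $\dim N_H^- + \dim K_H^0 = \dim K - \dim \n_H^+$, matching the codimension of the unstable directions at each critical component.

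The main obstacle I anticipate is the gradient assertion: one must carefully select a metric on $K$ that is invariant under the $M$-action and turns $\pi$ into a Riemannian submersion onto the DKV metric on $\F$, and then verify that the pullback Hessian admits no fiber-direction contributions so that $-\nabla \tilde f = X_H$ holds exactly rather than up to reparametrization or gauge. A secondary subtlety is reconciling the $U_H \backslash U$ indexing of the fixed-point components of $h^t$ on $K$ with the Weyl-group indexing of critical components on $\F$, which is essential for the identification of attractors via the isomorphism $U/C \cong W$.
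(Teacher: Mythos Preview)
Your lifting strategy from $\F = K/M$ is viable and differs from the paper's direct approach. The paper defines the height function $f_H(kb) = \langle kH_r, H\rangle$ intrinsically on $K$ (for a fixed regular $H_r \in \a^+$), constructs an explicit left-$K$-invariant metric $B$ by assigning weights $c_\lambda = -2\lambda$ to the $\ad(H_r)$-eigenspaces in $\n^-$ and an arbitrary $c_0 > 0$ to $\m$, and then verifies $H\cdot = \nabla_B f_H$ by a root-space computation. Your Riemannian-submersion argument reaches the same conclusion more structurally once one checks that the induced field $X_H$ is horizontal for $\pi: K \to K/M$: writing $\Ad(k^{-1})H = Y_0 + Y_+ + Y_-$ along $\g_0 \oplus \n \oplus \n^-$, its $\k$-projection is $Y_- + \theta Y_-$, which lies in the Cartan-orthogonal complement of $\m$ in $\k$. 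Your fixed-point computation via $\Ad(k^{-1})H \in \a$ (using $\s \cap (\a \oplus \n) = \a$; the paper writes $\s$ for what you call $\p$) is correct and somewhat cleaner than the paper's route through the criticality condition $[kH_r,H]=0$; your stable-manifold and attractor arguments are essentially the same as theirs.

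The one genuine error is the phrase ``bi-invariant metric''. A bi-invariant metric on $K$ is, up to scaling on simple factors, the Cartan inner product on $\k$, and the metric it induces on $\F$ is the normal homogeneous one with equal weights on all root spaces; this is \emph{not} the Borel metric of \cite{dkv}, whose weight on $\g_{-\alpha}$ is $2\alpha(H_r)$. With the bi-invariant metric, $\nabla\tilde f$ and $X_H$ share the same zero set but differ by a root-dependent rescaling on each $\g_\alpha$-direction, so in rank $\geq 2$ they are not even pointwise proportional and you obtain only a gradient-like flow, not an honest gradient. The fix is precisely what you flag as your ``main obstacle'' and what the paper actually carries out: drop bi-invariance and take a merely left-$K$-invariant metric that restricts to the Borel metric on the horizontal distribution and is arbitrary on $\m$. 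With that correction your lifting argument goes through.
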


The second main set of results of the present article extends to the maximal compact subgroups the results proved in \cite{ferraiol,Patrao1}, concerning the dynamics of general translations $g^t$ acting on the flag manifolds of $G$, describing its recurrent set, its minimal Morse components and respective stable manifolds through the Jordan decomposition of $g^t$, and showing that theses components are normally hyperbolic.

\begin{theorem}\label{theorem2}
Let $g^t$ be a general translation on $K \simeq G/AN$ and $g^t = e^t h^t u^t$ be its Jordan decomposition. Then
\begin{enumerate}[$(i)$]
\item The recurrent set is given by
\[
\mathcal{R}(g^t) = {\rm fix} (h^t) \cap {\rm fix} (u^t)
\]

\item The minimal Morse components of $g^t$ are normally hyperbolic and given by
\[
 {\cal M}(g^t,u) = {\rm fix}(H, u) \cup g{\rm fix}(H, u)
\]
If $g$ is in the connected component of the identity $G_H^0$ of the centralizer of $H$ in $G$, which always happens when $\mathbb{T} = \R$, then ${\cal M}(g^t,u)$ is connected and equal to ${\rm fix}(H, u)$. If $g \notin G_H^0$, then ${\cal M}(g^t,u)$ has two connected components ${\rm fix}(H, u)$ and $g{\rm fix}(H, u)$.

\item The chain recurrent set is given by
\[
{\cal R}_C (g^t) = {\rm fix} (h^t) = \uniaodisj \left\{ {\cal M}(g^t,u): \, u \in U_H^g \backslash U \right\}
\]
where
\[
U_H^g := U_H \cup c_gU_H
\]
for a element $c_g \in C$ such that ${\rm fix}(H, c_gu) = g{\rm fix}(H, u)$. The attractors are given by ${\cal M}(g^t,c)$ where $c \in C$.

\item The stable manifold of ${\cal M}(g^t,u)$ is given by
\[
 {\rm st}(g^t,u) := N^-_H {\cal M}(g^t,u)
\]
\end{enumerate}
\end{theorem}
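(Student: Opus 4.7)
The strategy is to exploit the Jordan decomposition: the factors $e^t$, $h^t$, $u^t$ pairwise commute, so each of the sets $\mathrm{fix}(h^t)$, $\mathrm{fix}(u^t)$, $\mathrm{fix}(e^t)$ is preserved by the whole flow $g^t$. Theorem~\ref{theorem1} already describes the hyperbolic part on all of $K$, so the main task is to verify that the elliptic and unipotent corrections do not destroy the gradient picture of $h^t$, but only move points within the connected components of $\mathrm{fix}(h^t)$.

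For item $(i)$, the gradient property of $h^t$ forces $\mathcal{R}(g^t) \subseteq \mathrm{fix}(h^t)$: any recurrent point of $g^t$ must lie on the zero set of the Lyapunov function of $h^t$, since $u^t$ and $e^t$ preserve its level sets. On $\mathrm{fix}(h^t)$ the induced dynamics reduces to that of the commuting pair $e^t u^t$; since $e^t$ acts by isometries (making every point recurrent for it) and the unipotent flow $u^t$ on any compact invariant set has recurrent set equal to its fixed-point set, one obtains $\mathcal{R}(g^t) = \mathrm{fix}(h^t) \cap \mathrm{fix}(u^t)$. Item $(iii)$ follows by a similar argument: the chain recurrent set of a gradient flow coincides with its fixed-point set, and the commuting corrections can only permute components of $\mathrm{fix}(h^t)$, so $\mathcal{R}_C(g^t) = \mathrm{fix}(h^t)$. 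The element $c_g \in C$ is then identified by representing the class of $g$ modulo $G_H^0$ and verifying that $\mathrm{fix}(H, c_g u) = g\,\mathrm{fix}(H, u)$, yielding the coset description $U_H^g = U_H \cup c_g U_H$.

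For item $(ii)$, once the Morse components are realised as $g$-orbits of the components $\mathrm{fix}(H, u) = K_H^0 u b$ from Theorem~\ref{theorem1}, the structural description is immediate: if $g \in G_H^0$ (which is automatic in the continuous case $\mathbb{T} = \R$, because then $g$ lies in a one-parameter subgroup through $H$), $g$ preserves each component; otherwise $g$ swaps $\mathrm{fix}(H, u)$ with $\mathrm{fix}(H, c_g u)$, producing a two-component Morse set. The main technical obstacle is normal hyperbolicity, which I would establish by transporting the root-space decomposition of $\g$ with respect to $\mathrm{ad}(H)$ to the tangent bundle of $K$ via the diffeomorphism $K \simeq G/AN$: on the negative and positive root spaces of $H$, the differential of $h^t$ contracts and expands exponentially at rates $e^{t\alpha(H)}$, while $e^t u^t$ acts tangentially with linearisation whose growth is quasi-periodic times polynomial. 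Since the commuting factors $e^t, u^t$ share with $h^t$ a simultaneous block decomposition along root spaces, the normal exponential rates strictly dominate the tangential rates, giving normal hyperbolicity.

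For item $(iv)$, a point $kb$ converges to $\mathcal{M}(g^t,u)$ under $g^t$ if and only if it converges under $h^t$ to $\mathrm{fix}(H,u) \cup g\,\mathrm{fix}(H,u)$, because $e^t u^t$ contributes only a tangential drift along the fixed-point set. Theorem~\ref{theorem1}$(iii)$ then identifies the stable manifold of each connected component as $N_H^-\mathrm{fix}(H,u)$, and taking the $g$-orbit union yields $\mathrm{st}(g^t,u) = N_H^-\mathcal{M}(g^t,u)$. Throughout, the delicate point remains the rate comparison for normal hyperbolicity; the other items reduce cleanly, via commutativity of the Jordan factors, to the hyperbolic case already handled by Theorem~\ref{theorem1}.
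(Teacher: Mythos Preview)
Your overall architecture matches the paper's, but two steps are genuinely incomplete and would not go through as written.

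\textbf{The lower bound in item (iii).} You argue that ``the chain recurrent set of a gradient flow coincides with its fixed-point set, and the commuting corrections can only permute components of $\mathrm{fix}(h^t)$, so $\mathcal{R}_C(g^t)=\mathrm{fix}(h^t)$''. This gives only the inclusion $\mathcal{R}_C(g^t)\subseteq\mathrm{fix}(h^t)$. The reverse inclusion is the hard part: you must show that every point of $\mathrm{fix}(h^t)$ is chain recurrent for $g^t$, i.e.\ for the restricted flow $e^t u^t$. This is not a formality: on a component $K_H^0 u b$ the unipotent part $u^t$ is typically not recurrent anywhere except at its fixed points (cf.\ the $\Sl(2,\R)$ example), so one has to build $(\epsilon,T)$-chains by hand. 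The paper does this by exploiting two facts simultaneously: the asymptotics $u^t kb_0 \to k_0' b_0$ on the flag, and the right $M$-invariance of $\mathrm{fix}(h^t)$. Concretely, one shows that for large $s$ both $u^{-s}kb$ and $u^s kb$ land near points of the same $M$-fiber, say near $k_0 b$ and $k_0 m b$; chaining through the right $M$-action and iterating until $m^p$ is close to $1$ closes the chain. Without this construction you cannot conclude that the Morse decomposition $\{\mathcal{M}(g^t,u)\}$ is \emph{minimal}, only that it is \emph{a} Morse decomposition.

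\textbf{The recurrent set of the unipotent part in item (i).} Two claims here need justification. First, ``$u^t$ and $e^t$ preserve its level sets'': the height function $f_H$ is $K_H$-invariant, which handles $e^t\in K_H$, but $u^t\in G_H$ acts on $K=G/AN$ through the Iwasawa projection and there is no reason for $f_H(\kappa(u^t k)b)=f_H(kb)$ in general. The paper sidesteps this by deducing $\mathcal{R}(g^t)\subseteq\mathrm{fix}(h^t)$ from the Morse decomposition itself (via $\mathcal{R}\subseteq\mathcal{R}_C$), not from a Lyapunov argument. Second, the assertion ``the unipotent flow $u^t$ on any compact invariant set has recurrent set equal to its fixed-point set'' is exactly what is at stake and is not a general dynamical fact. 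On $K$ a recurrent point of $u^t$ need only satisfy $u^t kb = k m_t b$ with $m_t\in M$ a one-parameter family; you must still rule out $m_t\neq 1$. The paper does this by passing to the adjoint group, where the Iwasawa projection is a genuine Gram--Schmidt process: then the entries of $\widehat{m_t}$ are square roots of rational functions of $t$, hence have a limit $\widehat{m}$ as $t\to\infty$, and the homomorphism property $\widehat{m_t}\widehat{m}=\widehat{m}$ forces $\widehat{m_t}=1$, whence $m_t$ lies in the discrete center and so $m_t=1$. This argument is specific and you should not expect it to fall out of a general ``unipotent $\Rightarrow$ recurrent $=$ fixed'' principle.

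Your treatment of items (ii) and (iv), including the normal hyperbolicity via root-space estimates (exponential in $\alpha(H)$ normally, polynomial $\times$ isometric tangentially), is in line with the paper's approach.
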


The structure of the article is the following. In Section 2, we set the notation and recall the necessary definitions and results about dynamics, homogeneous spaces of Lie groups, semi-simple Lie theory and dynamics of translations on maximal flag manifolds. In Section 3, we prove Theorem \ref{theorem1} and, in Section 4, we prove Theorem \ref{theorem2}. We end this introduction presenting some low dimensionsional examples where the results can be easily visualized and a higher dimensionsional example with a richer dynamics.

\begin{example}
Let $G = \Sl(2,\R)$ and maximal compact subgroup
\[
K
=
\SO(2)
=
\left\{
k
=
\begin{pmatrix}
\cos \alpha & -\sin \alpha \\
\sin \alpha & \cos \alpha
\end{pmatrix}
: \alpha \in \R \right\}
\]
with the other Iwasawa subgroups components given by:
\[
A
=
\left\{
a
=
\begin{pmatrix}
x & 0 \\
0 & x^{-1}
\end{pmatrix}
: x>0 \right\}
\qquad
\mbox{and}
\qquad
N = \left\{
n
=
\begin{pmatrix}
1 & x \\
0 & 1
\end{pmatrix}
: x \in \R \right\}
\]
First we observe that we can identify $K = \SO(2)$ with the unity circle $S^1$ in $\r^2$, since each element of $K = \SO(2)$ is determined by its first column, which is a point in $S^1$. We have that
\[
U
=
\left\{
\begin{pmatrix}
1 & 0 \\
0 & 1
\end{pmatrix},
\begin{pmatrix}
-1 & 0 \\
0 & -1
\end{pmatrix},
\begin{pmatrix}
0 & -1 \\
1 & 0
\end{pmatrix},
\begin{pmatrix}
0 & 1 \\
-1 & 0
\end{pmatrix}
\right\}
\]
while
\[
C
=
\left\{
\begin{pmatrix}
1 & 0 \\
0 & 1
\end{pmatrix},
\begin{pmatrix}
-1 & 0 \\
0 & -1
\end{pmatrix}
\right\}
\]

Now we consider the hyperbolic action on the compact group $\SO(2)$ given by:
\[
h^t
=
\exp(tH)
=
\begin{pmatrix}
\e^t & 0 \\
0 & \e^{-t}
\end{pmatrix},
\qquad
\mbox{where}
\qquad
H
=
\begin{pmatrix}
1 & 0 \\
0 & -1
\end{pmatrix}
\]
whose action is given by:
\[
\kappa(h^tk)
=
\begin{pmatrix}
\cos\alpha/\sqrt{\cos^2\alpha+\e^{-4t}\sin^2\alpha} & * \\
\sin\alpha/\sqrt{\e^{4t}\cos^2\alpha+\sin^2\alpha} & *
\end{pmatrix}
\]
where we applied the Gram-Schmidt process in the first column of the following the matrix:
\[
h^tk
=
\begin{pmatrix}
\e^t\cos\alpha & -\e^t\sin \alpha \\
\e^{-t}\sin\alpha & \e^{-t}\cos \alpha
\end{pmatrix}
\]
Now it is easy to see that this dynamics is gradient, where the attractors are the points with $\sin \alpha=0$ and the repellers are the points with $\cos \alpha =0$. The first column of the matrices is used to plot the results in matrix form.

\begin{center}
\begin{tikzpicture}

\filldraw[color=black!100, fill=red!0, very thick](6,0) circle (2);
\draw[very thick, ->] (6,-2) arc (-90:-45:2);
\draw[very thick, ->] (6,-2) arc (270:225:2);
\draw[very thick, ->] (6,2) arc (90:135:2);
\draw[very thick, ->] (6,2) arc (90:45:2);

\filldraw [black] (8,0) circle (2pt) node[anchor=west] {$\begin{pmatrix}
1 & 0 \\
0 & 1
\end{pmatrix}$};

\filldraw [black] (4,0) circle (2pt) node[anchor=east] {$\begin{pmatrix}
-1 & 0 \\
0 & -1
\end{pmatrix}$};
\filldraw [black] (6,2) circle (2pt) node[anchor=south] {$\begin{pmatrix}
0 & -1 \\
1 & 0
\end{pmatrix}$};
\filldraw [black] (6,-2) circle (2pt) node[anchor=north] {$\begin{pmatrix}
0 & 1 \\
-1 & 0
\end{pmatrix}$};
\end{tikzpicture}
\end{center}
Since $K_H^0$ is trivial, the fixed points components are isolated points given by the elements of $U$, while the two attractors are given by the elements of $C$.

If $\t = \Z$, we can consider the dynamics given by the translations of the following element
\[
g^t
=
\begin{pmatrix}
(-\e)^t & 0 \\
0 & (-\e)^{-t}
\end{pmatrix}
\]
whose action is given by:
\[
\kappa(g^tk)
=
\begin{pmatrix}
(-1)^t\cos\alpha/\sqrt{\cos^2\alpha+\e^{-4t}\sin^2\alpha} & * \\
(-1)^t\sin\alpha/\sqrt{\e^{4t}\cos^2\alpha+\sin^2\alpha} & *
\end{pmatrix}
\]
where we applied the Gram-Schmidt process in the first column of the following the matrix:
\[
g^tk
=
\begin{pmatrix}
(-\e)^t\cos\alpha & -(-\e)^t\sin \alpha \\
(-\e)^{-t}\sin\alpha & (-\e)^{-t}\cos \alpha
\end{pmatrix}
\]
Since $g^t = h^te^t$ is the Jordan decomposition, where the elliptic component is given by
\[
e^t
=
\begin{pmatrix}
(-1)^t & 0 \\
0 & (-1)^{-t}
\end{pmatrix}
\]
the dynamics is not gradient anymore, there are only two minimal Morse components, where the attractor is given by $C = \{I,c_g\}$, with
\[
c_g
=
\begin{pmatrix}
-1 & 0 \\
0 & -1
\end{pmatrix}
\]
and the repeller is given by
\[
\left\{
\begin{pmatrix}
0 & -1 \\
1 & 0
\end{pmatrix},
\begin{pmatrix}
0 & 1 \\
-1 & 0
\end{pmatrix}
\right\}
\]
both with two connected components (two isolated points).

Now we consider the unipotent action on the compact group $\SO(2)$ given by:
\[
u^t
=
\exp(tX)
=
\begin{pmatrix}
1 & t \\
0 & 1
\end{pmatrix},
\qquad
\mbox{where}
\qquad
X
=
\begin{pmatrix}
0 & 1 \\
0 & 0
\end{pmatrix}
\]
whose action is given by:
\[
\kappa(u^tk)
=
\begin{pmatrix}
((\cos\alpha)/t + \sin\alpha)/\sqrt{1/t^2+(\sin2\alpha)/t+\sin^2\alpha} & * \\
\sin\alpha/\sqrt{1+t\sin2\alpha+t^2\sin^2\alpha} & *
\end{pmatrix}
\]
where we applied the Gram-Schmidt process in the first column of the following the matrix:
\[
u^tk
=
\begin{pmatrix}
\cos\alpha + t\sin\alpha &- \sin \alpha +t\cos \alpha  \\
\sin\alpha & \cos\alpha
\end{pmatrix}
\]
The system is not gradient, but it is chain transitive, while the recurrent set coincides with the fixed points, which are given by $C$.

\begin{center}
\begin{tikzpicture}

\filldraw[color=black!100, fill=red!0, very thick](6,0) circle (2);

\draw[very thick, ->] (8,0) arc (0:-90:2);
\draw[very thick, ->] (4,0) arc (180:90:2);

\filldraw [black] (8,0) circle (2pt) node[anchor=west] {$\begin{pmatrix}
1 & 0 \\
0 & 1
\end{pmatrix}$};
\filldraw [black] (4,0) circle (2pt) node[anchor=east] {$\begin{pmatrix}
-1 & 0 \\
0 & -1
\end{pmatrix}$};
\end{tikzpicture}
\end{center}

\end{example}

\begin{example}
Let $G = \Sl(3,\R)$ and maximal compact subgroup $K = \SO(3)$, with the other Iwasawa components given by the subgroup $A$ of diagonal matrices with positive diagonal elements and the subgroup $N$ of upper triangular matrices with diagonal elements equal to one. We have that $U$ has 24 elements which are given by permutation matrices with signal and determinant one such as
\[
\begin{pmatrix}
 0 & 1 &  0 \\
 0 & 0 & -1 \\
-1 & 0 &  0
\end{pmatrix}
\]
while
\[
C
=
\left\{
\begin{pmatrix}
1 & 0 & 0 \\
0 & 1 & 0 \\
0 & 0 & 1
\end{pmatrix},
\begin{pmatrix}
-1 & 0 & 0 \\
0 & -1 & 0 \\
0 & 0 & 1
\end{pmatrix},
\begin{pmatrix}
1 & 0 & 0 \\
0 & -1 & 0 \\
0 & 0 & -1
\end{pmatrix},
\begin{pmatrix}
-1 & 0 & 0 \\
0 & 1 & 0 \\
0 & 0 & -1
\end{pmatrix}
\right\}
\]

Now consider the dynamics given by the translations on $\SO(3)$ of the following element
\[
g^t
=
\begin{pmatrix}
\e^{2t} & 0 & 0 \\
0 & \e^{-t} & t\e^{-t} \\
0 & 0 & \e^{-t}
\end{pmatrix}
\]
Its Jordan decomposition is given by $g^t = h^tu^t$, where
\[
h^t
=
\exp(tH)
=
\begin{pmatrix}
\e^{2t} & 0 & 0 \\
0 & \e^{-t} & 0 \\
0 & 0 & \e^{-t}
\end{pmatrix},
\qquad
\mbox{where}
\qquad
H
=
\begin{pmatrix}
2 & 0 & 0 \\
0 & -1 & 0 \\
0 & 0 & -1
\end{pmatrix}
\]
and
\[
u^t
=
\begin{pmatrix}
1 & 0 & 0 \\
0 & 1 & t \\
0 & 0 & 1
\end{pmatrix}
\]
Since
\[
K_H^0
=
\left\{
\begin{pmatrix}
1 & 0 & 0 \\
0 & \cos \alpha & -\sin \alpha \\
0 & \sin \alpha & \cos \alpha
\end{pmatrix}
: \alpha \in \R
\right\}
\]
which is isomorphic to $\SO(2)$, and
\[
U_H
=
\left\{
\begin{pmatrix}
1 & 0 & 0 \\
0 & 1 & 0 \\
0 & 0 & 1
\end{pmatrix},
\begin{pmatrix}
1 & 0 & 0 \\
0 & -1 & 0 \\
0 & 0 & -1
\end{pmatrix},
\begin{pmatrix}
1 & 0 & 0 \\
0 & 0 & 1 \\
0 & -1 & 0
\end{pmatrix},
\begin{pmatrix}
1 & 0 & 0 \\
0 & 0 & -1 \\
0 & 1 & 0
\end{pmatrix}
\right\}
\]
there are 6 minimal Morse components given by $K_H^0u$, for $u \in U_H \backslash U$, all diffeomorphic to the circle. The recurrent set is given by ${\rm fix} (h^t) \cap {\rm fix} (u^t)$, which has 12 elements, two in each minimal Morse component. For example, inside $K_H^0$, the recurrent set is given by
\[
\left\{
\begin{pmatrix}
1 & 0 & 0 \\
0 & 1 & 0 \\
0 & 0 & 1
\end{pmatrix},
\begin{pmatrix}
1 & 0 & 0 \\
0 & -1 & 0 \\
0 & 0 & -1
\end{pmatrix}
\right\}
\]

\end{example}

\section{Preliminaries}\label{secpreliminar}

\subsection{Dynamics}

We recall some concepts of topological dynamics (for more
details, see \cite{conley,pugh}). Let $\phi :\mathbb{T}\times F\to F$
be a continuous dynamical system on a compact metric space $(F, d)$, with
discrete $\mathbb{T}={\mathbb Z}$ or continuous
$\mathbb{T}={\mathbb R}$ time.
Denote by $\omega(x)$, $\omega^*(x)$, respectively, the forward and
backward omega limit sets of $x$. A Morse decomposition of $\phi^t$, which is given by a finite collection of disjoint subsets $
\{{\mathcal M}_{1},\ldots ,{\mathcal M}_{n}\}$ of $F$ such that
\begin{enumerate}[$(i)$]
\item each ${\mathcal M}_i$ is compact and
$\phi\,^t$-invariant,

\item for all $x \in F$ we have $\omega(x),\, \omega^*(x)
\subset \bigcup_i {\mathcal M}_i$,

\item if $\omega(x),\, \omega^*(x) \subset {\mathcal M}_j$
then $x \in {\mathcal M}_j$.
\end{enumerate}
The minimal Morse decomposition is a Morse decomposition which is contained in every other Morse decomposition. Each set ${\mathcal M}_i$ of a minimal Morse decomposition is called a minimal Morse component. The stable/unstable set of a morse component ${\mathcal M}_i$ is the set of all points whose forward/backward omega limit set is contained in ${\mathcal M}_i$. An $(\epsilon, t)$-{\em chain} from $x$ to $y$ is a sequence of points
\[
\{ x = x_0, \dots, x_n = y \} \subset F
\]
and a sequence of times $t_i$ such that $t_i \geq t$ and $d(\phi^{t_i} (x_i), x_{i+1}) < \epsilon$. The {\em chain recurrent set} of a flow $\phi^t$, ${\cal R_C }(\phi^t)$, is the set of points $x$ such that there is $(\epsilon, t)$-chain for every $\epsilon >0$ and $t>0$ from $x$ to $x$. A {\em minimal} Morse decomposition is a decomposition that is contained in any other Morse decomposition. Each element of the decomposition is also called a {\em component} of the decomposition. An important result from dynamical systems (see \cite{conley}) is that if the Morse components are connected and
\[
\cup_i {\cal M}_i = {\cal R_C} (\phi^t)
\]
then this decomposition is minimal. The {\em recurrent set} of a flow $\phi^t$ in a space $F$ is the set of points
\[
{\cal R} (\phi^t ) := \{ x \in X : x \in \omega (x) \}
\]

Now assume that $\phi$ is a diffeomorphism on a Riemannian manifold $F$ and $D\phi$ its derivative. An invariant submanifold ${\cal M} \subset F$ is normally hyperbolic if the tangent bundle of $F$ over ${\cal M}$ has invariant vector subbundles $V^+$ and $V^-$ and positive constants $c$ and $\nu < \lambda$ such that
\begin{enumerate}[$(i)$]
\item $TF|_{\cal M} =  T{\cal M} \oplus V^- \oplus V^+ $
\item $|D \phi^n v| \leq c e^{-\lambda n} |v| \text{ for all } v \in V^- \text{ and } n \geq 0$
\item $|D \phi^n v| \leq c e^{\lambda n} |v| \text{ for all } v \in V^+ \text{ and } n \leq 0$
\item $|D \phi^n v| \leq c e^{\nu |n|} |v| \text{ for all } v \in T{\cal M} \text{ and } n \in \Z$
\end{enumerate}
in this case, $V^-$ is said to be the stable bundle and $V^+$ the unstable bundle of ${\cal M}$.
If $\phi^t$ is a differentiable flow on $F$, $t \in \R$, an invariant submanifold ${\cal M}$ is normally hyperbolic if its is normally hyperbolic for the time one diffeomorphism $\phi^1$. If the flow $\phi^t$ is gradient relative to some height function $f : F \to \R$, the following sets coincide:
\begin{enumerate}[$(i)$]
\item The critical points of $f$.

\item The fixed points of $\phi^t$, $t \in \t$.
\end{enumerate}

\subsection{Homogeneous spaces of Lie groups}
\label{homogspaces}

For the theory of Lie groups and its homogeneous spaces we refer to Hilgert and Neeb \cite{neeb} and Knapp \cite{knapp} and for the theory of principal bundles we refer to Steenrod \cite{steenrod}.  Let $G$ be a real Lie group with Lie algebra $\g$ where $g \in G$ acts on $X \in \g$ by the adjoint action
$g X = \Ad( g ) X$.  We have that  $\Ad( \exp(X) ) = e^{\ad(X)}$
where $\exp: \g \to G$ is the exponential of $G$, $\Ad$ and $\ad$ are, respectively, the adjoint representation of $G$ and $\g$.

Let a Lie group $G$ act on a manifold $F$ on the left by the differentiable map
$
G \times F \to F$,  $(g,x) \mapsto gx
$.
Fix a point $x \in F$.  The isotropy subgroup $G_{x}$ is the set of all $g \in G$ such that $gx = x$.  We say that the action is transitive or, equivalently, that $F$ is a homogeneous space of $G$, if $F$ equals the orbit $Gx$ of $x$ (and hence the orbit of every point of $F$).
In this case, the map
$$
G \to F\qquad g \mapsto g x
$$
is a submersion onto $F$ which is a differentiable locally trivial principal fiber bundle with structure group the isotropy subgroup $G_{x}$.
Quotienting by $G_{x}$ we get the diffeomorphism
$$
G/G_{x} \simto F\qquad g G_{x} \mapsto g x
$$

If $L$ is a Lie subgroup of $G$, the orbit $L x$ is the set of all $hx$, $h \in L$. The restriction of the principal fiber bundle $G \to F$ to $L$ gives the submersion onto the orbit $L x$
$$
L \to L x \qquad l \mapsto l x
$$
which is a differentiable locally trivial principal fiber bundle with structure group $L_x = L \cap G_{x}$.  If $L x$ is an embedded submanifold of $F$ then around every point in $L x$ there exists a differentiable local section from $L x$ to $L$ that is a restriction of a local section from $F$ to $G$ of the principal fiber bundle $G \to F$.

Since the map $G \to F$ is a submersion, the derivative of the map $g \mapsto gx$ on the identity gives the infinitesimal action of $\g$, more precisely, a surjective linear map
$$
\g \to TF_x \qquad Y \mapsto Y \cdot x
$$
whose kernel is the isotropy subalgebra $\g_x$, the Lie algebra of $G_x$.
The derivative of the map $g: F \to F$, $x \mapsto gx$, gives the action of
$G$ on tangent vectors $g v = D g (v)$, $v \in TF$, which is related to the infinitesimal action by
$$
g( Y \cdot x ) = gY \cdot gx
$$
For a subset $ \q \subset \g$, denote by $\q \cdot x$ the set of all tangent vectors $Y \cdot x$, $Y \in \q$.
It follows that $ TF_{gx} = g( \g \cdot x ) $. In particular, for $l \in L$, the tangent space of the orbit $L x$ at $l x$ is given by $l (\l \cdot x) \subset TF_{lx}$, where $\l \subset \g$ is the Lie algebra of $L$. Thus, the tangent bundle of the orbit is given by
$$
T(L x) = L( \l \cdot x )
$$
Let $E$ be another manifold with a differentiable action of $G$, a map $f: F \to E$ is said to be $G$-equivariant if $f(g x) = g f(x)$. Such a $G$-equivariant map is automatically differentiable.

\subsection{Semi-simple Lie theory}\label{section-lie}

For the theory of real semisimple Lie groups and their flag manifolds we refer to Duistermat-Kolk-Varadarajan \cite{dkv}, Hilgert and Neeb \cite{neeb}, and Knapp \cite{knapp}.
Let $G$ be a connected real Lie group with semi-simple Lie algebra $\g$ and finite center.
Fix a Cartan decomposition $\g = \k \oplus \s$ and denote by $\theta$ the Cartan involution and by $\langle \cdot, \cdot \rangle$ the associated Cartan inner product.
Let $K$ be the connected subgroup with Lie algebra $\k$, it is a maximal compact subgroup of $G$.
Since $\ad(X)$ is anti-symmetric for $X \in \k$, the Cartan inner product is $K$-invariant.
Since $\ad(X)$ is symmetric for $X \in \s$, a maximal abelian subspace $\frak{a} \subset \frak{s}$ can be simultaneously diagonalized so that $\g$ splits as an orthogonal sum of
$$
\g_\alpha = \{ X \in \g:\, \ad(H)X = \alpha(H)X, \, \forall H \in \a \}
$$
where $\alpha \in \a^*$ (the dual of $\a$). We have that $\g_0 = \m \oplus \a$, where $\m$ is the centralizer of $\a$ in $\k$. A root is a functional $\alpha \neq 0$ such that its root space $\g_\alpha \neq 0$, denote the set of roots by $\Pi$. We thus have the root space decomposition of $\g$, given by the orthogonal sum
$$
\g = \m \oplus \a \oplus \sum_{\alpha \in \Pi} \g_\alpha
$$
Fix a Weyl chamber $\frak{a}^{+}\subset \frak{a}$ and let $\Pi^{+}$ be the corresponding positive roots, $\Pi^- = - \Pi^+$ the negative roots and $\Sigma $ the set of simple roots.  Consider the nilpotent subalgebras
\[
\n^\pm = \sum_{\alpha \in \Pi^{\pm}}\frak{g}_{\alpha }
\]
such that
\[
\g = \m \oplus \a \oplus \n \oplus \n^-
\]

In this paper, we look at $K$ as the homogeneous manifold $G/AN$, denoting its base, the left coset $AN$, as $b$. The natural action of $G$ in $G/AN$ is given by left multiplication (as in section 10.1 of \cite{neeb}). From the Iwasawa decomposition, it follows that the map $K \to G/AN$ given by $k \mapsto k b$, is a $K$-equivariant diffeomorphism. The isotropy subalgebra of the base $b$ is given by $\a \oplus \n$, while the isotropy subalgebra of $k b$ for $k \in K$ is given by
\[
\g_{k b} = k (\a \oplus \n)
\]
We also look at the maximal flag manifold $\F$ of $G$ as the homogeneous manifold $G/MAN$, denoting its base, the left coset $MAN$, as $b_0$. and is diffeomorphic to $K/M$. The natural projection $K \to \F$ given by $kb \mapsto k b_0$, is a $M$-principal bundle with the action of the structural group given $(kb,m) \mapsto kmb$.

The Weyl group $W$ is the finite group generated by the reflections over the root hyperplanes $\alpha=0$ in $\frak{a}$, $\alpha \in \Pi$. $W$ acts on $\frak{a}$ by isometries and can be alternatively be given as $W=M^{*}/M$ where $M^{*}$ and $M$ are the normalizer and the centralizer of $\a$ in $K$,
respectively.
An element $w$ of the Weyl group $W$ can act in $\g$ by taking a representative in $M^*$.  This action centralizes $\a$, normalizes $\m$, permutes the roots $\Pi$ and thus permutes the root spaces $\g_\alpha$, where $w \g_\alpha = \g_{w \alpha}$ does not depend on the representative chosen in $M^*$.

Let $H \in \frak{a}$ and denote the centralizer of $H$ in $G$ and $K$ respectively by $G_{H}$ and $K_H$. We have that
\[
G_{H} = K_{H} A (G_{H} \cap N)
\]
and that
\[
 G_{H} = G_{H}^0M
 \qquad \mbox{and} \qquad
 K_{H} = K_{H}^0M
\]
where $G_{H}^0$ and $K_{H}^0$ are the connected components of the identity respectively of $G_{H}$ and $K_H$.
Consider the nilpotent subalgebras
\[
\frak{n}^\pm_H = \sum_{\pm \alpha(H) > 0} \g_\alpha
\]
given by the the sum of the positive/negative eigenspaces of $\ad(H)$ in $\g$ and let $N_H^{\pm}$ be the corresponding connected Lie subgroups. Since $G_H$ leaves invariant each eigenspace of $\ad(H)$ it follows that $\frak{n}^\pm_H$ and $N_H^{\pm}$ are $G_H$-invariant. If $Z \in \s$ centralizes $H$, then there exists $k \in K_H^0$ such that $kZ \in \a$ and, if $Z \in \a$ and $kZ \in \a$ for some $k \in K$, then there exists $m \in M_*$ such that $kZ = mZ$.

\subsection{Translations on maximal flag manifolds}\label{subsec-translations}

Here we collect some previous results about the dynamics of a flow $g^t$ of translations of a real semisimple Lie group $G$ acting on its maximal flag manifold $\F$. The flow $g^t$ is either given by the iteration of some $g \in G$, for $t \in \Z$, or by $\exp(tX)$, for $t \in \R$, where $X \in \g$, and $g^t$ acts on $\F$ by left translations. Since $G$ acts on its flag manifolds by the adjoint action we will assume that $G$ is a linear Lie group, and thus $\g$ is linear Lie algebra.

For the description of the flow $h^t = \exp (tH)$, $t \in \R$, induced by $H\in \mathrm{cl}\frak{a}^{+}$ on the flag manifold ${\mathbb F}_{\Theta }$ see (\cite{dkv}, Section 3). The connected components of the fixed points of $h^t$ are labeled by $w \in W$, each one given by the orbit
\[
\mathrm{fix}_\F(H,w) = G_{H} wb_0 = K_{H} wb_0,
\]
which is an compact submanifold of $\F$, and its unstable/stable manifold of are given by
\[
N_H^{\pm} \mathrm{fix}_\F(H,w)
\]

The usual additive Jordan decomposition writes a matrix as a commuting sum of a semisimple and a nilpotent matrix and we can decompose the semisimple part further as the commuting sum of its imaginary and its real part, where each part commutes with the nilpotent part and the matrix is diagonalizable over the complex numbers iff its nilpotent part is zero.
This generalizes to a multiplicative Jordan decomposition of
the flow $g^t$ in the semisimplie Lie group $G$ (see Section 2.3 of \cite{ferraiol}), providing us with a commutative decomposition
\[
g^t = e^t h^t u^t
\]
where there exist a Cartan decomposition of $\g$ with a corresponding maximal compact subgroup $K$ and a Weyl chamber $\a^+$ such that the elliptic component $e^t$ lies in $K$, the hyperbolic component is such that $h^t = \exp(tH)$, where $H \in \cl \a^+$, and the unipotent component is such that $u^t = \exp(tN)$, with $N \in \g$ nilpotent. Furthermore, we have that $h^t$, $e^t$ and $u^t$ lie in $G_H$, the centralizer of $H$ in $G$, and that $g^t$ is diagonalizable iff its unipotent part is $u^t = 1$. We have that, for all $kb_0 \in F$ there is $k'_0 \in K$ so that $u^t k b_0 \to k'_0 b_0$ when $ t \to \pm \infty$.
We have that the hyperbolic component $H$ dictates the minimal Morse components (see Proposition 5.1 and Theorem 5.2 of \cite{ferraiol}).

\begin{proposition}
The minimal Morse components of $g^t$ on $\F$ are given by ${\rm fix}_\F(H,w)$, $w \in W$, and their unstable/stable manifolds are given by $N^\pm_H {\rm fix}_\F(H,w)$.
\end{proposition}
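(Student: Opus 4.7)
The plan is to reduce the statement to the purely hyperbolic case on $\F$, which is already covered by the cited results of Duistermaat--Kolk--Varadarajan, and to exploit that the Jordan factors $e^t$, $h^t$, $u^t$ commute and all lie in $G_H$. First I would verify invariance: since $e^t, h^t, u^t \in G_H$, one has $g^t \cdot G_H w b_0 = G_H w b_0$, so each ${\rm fix}_\F(H,w) = G_H w b_0$ is $g^t$-invariant and compact.

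Next I would analyze asymptotic behavior using the $h^t$-Bruhat-type decomposition $\F = \bigsqcup_{w \in W} N_H^- \,{\rm fix}_\F(H,w)$. Any $x \in \F$ can be written $x = n_- y$ with $n_- \in N_H^-$ and $y \in {\rm fix}_\F(H,w)$, so
\[
g^t x \;=\; \bigl(g^t n_- (g^t)^{-1}\bigr)\,g^t y.
\]
Since $G_H$ normalizes $N_H^-$, the conjugate lies in $N_H^-$; because $h^t$ contracts $N_H^-$ exponentially while $e^t$ stays in the compact group $K$ and $u^t$ grows at most polynomially, this conjugate tends to the identity as $t \to +\infty$. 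Meanwhile $g^t y$ remains in the compact set ${\rm fix}_\F(H,w)$. Hence $\omega(x) \subset {\rm fix}_\F(H,w)$, and symmetrically $\omega^*(x) \subset {\rm fix}_\F(H,w')$ for the unique $w'$ with $x \in N_H^+ {\rm fix}_\F(H,w')$. This simultaneously verifies properties $(i)$--$(ii)$ of a Morse decomposition and identifies the stable and unstable sets as $N_H^- {\rm fix}_\F(H,w)$ and $N_H^+ {\rm fix}_\F(H,w)$; property $(iii)$ then follows from the transversality of the two partitions.

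The hard part, and the step I would spend most effort on, is minimality: I must show each ${\rm fix}_\F(H,w) = K_H w b_0$ is chain transitive under $g^t|_{{\rm fix}_\F(H,w)}$. Since $g^t$ preserves this $K_H$-orbit and the hyperbolic factor acts trivially on it, the restriction is driven only by the elliptic and unipotent components acting through $G_H$. I would use compactness of $K_H w b_0$ together with the fact that unipotent trajectories have well-defined limits in $K_H$ to build the required $(\epsilon,t)$-chains, invoking the standard fact that translations by elements of a compact group (which captures the residual elliptic motion after passing to the closure of the unipotent flow) are chain transitive on each orbit.

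Once chain transitivity within each ${\rm fix}_\F(H,w)$ is established, the decomposition $\F = \bigsqcup_{w \in W} {\rm fix}_\F(H,w)$ consists of pairwise disjoint, compact, invariant, chain transitive components whose union equals the chain recurrent set (by the asymptotic analysis above). The Conley-type criterion recalled in the preliminaries then forces this to be the \emph{minimal} Morse decomposition, completing the proof and identifying the stable/unstable manifolds as $N_H^{\pm}\,{\rm fix}_\F(H,w)$.
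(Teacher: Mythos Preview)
The paper does not supply its own proof of this proposition: it is quoted in the preliminaries as a known result, with an explicit reference to Proposition~5.1 and Theorem~5.2 of \cite{ferraiol}. So there is no in-paper argument to compare against.

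That said, your outline is sound and in fact anticipates the strategy the paper itself deploys later for the $K$-case (Theorem~\ref{propmorseflag}): establish $g^t$-invariance of ${\rm fix}_\F(H,w)$ from $e^t,h^t,u^t\in G_H$; use the Bruhat-type decomposition plus the exponential contraction of $h^t$ on $N_H^-$ dominating the polynomial growth of $u^t$ to identify stable/unstable sets; and then argue minimality via chain transitivity on each component together with the Conley criterion. The one place your sketch is genuinely thin is the chain-transitivity step. Appealing to ``translations by elements of a compact group are chain transitive'' handles the elliptic factor, but the unipotent factor does not act through a compact group, and passing to ``the closure of the unipotent flow'' is not quite the right mechanism. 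What is actually used (both in \cite{ferraiol} and in the paper's Proposition~\ref{recorrente por cadeia completo} for $K$) is that unipotent orbits on the flag manifold converge to definite limits as $t\to\pm\infty$, which lets one construct $(\epsilon,T)$-chains by alternating long forward and backward time steps near those limits, interleaved with the near-recurrence of $e^t$. Filling in that construction is the only substantive work missing from your proposal.
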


\section{Hyperbolic translations}

In this section, we describe the dynamics of the hyperbolic component $h^t = \exp (t H)$, $t \in \R$ and $H \in \cl \a^+$, defined in the subsection \ref{subsec-translations}.

\subsection{Borel metric}

Let $H_r \in \a^+$ be a \emph{regular} element. We have that $M$ is the isotropy at $H_r$ by the adjoint action of $K$ on $\s$. Now, let $H$ be a \emph{fixed} element of $\cl \a^+$ and define {\em the height function} $f_H$ as:
\[
f_H: K = G/AN \to \R,\qquad kb \mapsto \langle{kH_r, H\rangle}
\]

\begin{proposition}\label{propfuncaoaltura}
The function $f_{H}$ is $K_H$-invariant and its differential is
\[
f_{H}'(k b)k(Z \cdot b) = \langle{[Z, H_r],\, k^{-1} H \rangle}
\]
where $k \in K$ and $Z \in \k$.
\end{proposition}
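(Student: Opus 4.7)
The plan is to reduce both assertions to the $K$-invariance of the Cartan inner product (recorded in Section \ref{section-lie}), combined with the standard identity $\frac{d}{dt}\big|_{t=0}\Ad(\exp(tZ)) = \ad(Z)$ for $Z\in\g$. Throughout, I read $kH_r$ as the adjoint action $\Ad(k)H_r$ on $\s$, which is what the notation in the definition of $f_H$ must mean since $H_r\in\a\subset\s$.

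For $K_H$-invariance, I would take $k'\in K_H$ and compute directly
\[
f_H(k'kb) = \langle \Ad(k'k)H_r,\, H\rangle = \langle \Ad(k)H_r,\, \Ad(k')^{-1}H\rangle = \langle \Ad(k)H_r,\, H\rangle = f_H(kb),
\]
where the second equality uses the $K$-invariance of $\langle\cdot,\cdot\rangle$ and the third uses that $(k')^{-1}\in K_H$ centralizes $H$.

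For the differential, the idea is to represent the tangent vector $k(Z\cdot b)\in T_{kb}(G/AN)$ as the velocity $\frac{d}{dt}\big|_{0}k\exp(tZ)b$ coming from the infinitesimal action described in Section \ref{homogspaces}, followed by left translation by $k$. Composing with $f_H$ gives the scalar curve $t\mapsto \langle \Ad(k\exp(tZ))H_r,\, H\rangle$, whose derivative at $t=0$ is
\[
\langle \Ad(k)[Z,H_r],\, H\rangle
\]
by the chain rule and the $\Ad$--$\ad$ identity. One final appeal to $K$-invariance transfers $\Ad(k)$ to the right entry as $\Ad(k)^{-1}=\Ad(k^{-1})$, producing the announced formula $\langle[Z,H_r],\, k^{-1}H\rangle$.

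No substantive obstacle is expected. The only point requiring care is notational: one has to verify that $k(Z\cdot b)$ really is the tangent vector obtained by first applying the infinitesimal action of $Z$ at $b$ and then pushing forward by left multiplication by $k$, so that the one-parameter curve $k\exp(tZ)b$ is legitimately the curve to differentiate in the computation above.
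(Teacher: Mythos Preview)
Your proposal is correct and follows essentially the same route as the paper: both use $K$-invariance of the Cartan inner product for the $K_H$-invariance, and both compute the differential by differentiating the curve $t\mapsto f_H(k\exp(tZ)b)$ via the $\Ad$--$\ad$ identity and then shuttling $k$ across the inner product. The only cosmetic difference is that the paper moves $k$ to the right entry before differentiating, whereas you differentiate first and then move $k$; this is immaterial.
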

\begin{proof}
If $l \in K_H$, since where $l$ acts as a $\langle \cdot, \cdot \rangle$-orthogonal map, we have that
\[
f_H (lkb) = \langle{ lkH_r, H\rangle } = \langle{ kH_r, l^{-1} H\rangle } = \langle{ kH_r, H\rangle} = f_H (kb)
\]
Let us evaluate its differential at $k b$ in the direction $k Z$ with $Z \in \k$,
\begin{eqnarray*}
f_{H}'(k b) k (Z \cdot b)
& = & \left. d/dt \right|_{t=0}\,f_{H}(k \exp(t Z) b) \\
& = & \left. d/dt \right|_{t=0}\,\langle{k \exp(t Z) H_r,\, H \rangle} \\
& = & \left. d/dt \right|_{t=0}\,\langle{ \e^{t\ad(Z)} H_r,\,k^{-1} H \rangle} \\
& = & \langle{ [Z, H_r],\, k^{-1} H \rangle}
\end{eqnarray*}
since $k$ is $\langle \cdot, \cdot \rangle$-orthogonal.
\end{proof}

Now, our next objective is to define a metric in $K$ such that the field induced by $H$ in $K$ is the the gradient of the height function $f_H$. Since the isotropy subalgebra at $b$ is given by $\a \oplus \n$ and we have the orthogonal decomposition
\[ 
\g = \m \oplus \a \oplus \n \oplus \n^-
\]
we have that
\begin{equation}
\label{eqperpisotropia1}
(\a \oplus \n)^\perp = \m \oplus \n^-
\end{equation}
can be viewed as the the tangent space of $K = G/AN$ at $b$. Similar to the proof of Proposition 3.1 of \cite{Patrao1}, if $B(X, Y)$ is an inner profuct on $\m \oplus \n^-$,  we have that
\[
B_{k b}(k (X \cdot b), k (Y \cdot b)) := B(X, Y) \mbox{ where } X,Y \in \m \oplus \n^- 
\]
defines a $K$-invariant Riemannian metric of $K = G/AN$. Let $\lambda$ be a real number, and ${\frak b}_\lambda$ be the $\lambda$-eigenspace of $\ad(H_r)$ in $\g$ given by
\[
{\frak b}_\lambda := \{ X \in \g: \, \ad (H_r) X = \lambda X \}
\]
Note that 
\[
{\frak b}_\lambda = \sum_{\alpha (H_r) = \lambda}\g_\alpha
\]
and
\[
\n^- = \sum_{\lambda < 0} {\frak b}_\lambda
\]
For $X \in \g$ let $X_\lambda$ be the orthogonal projection of $X$ in ${\frak b}_\lambda$ and let $X_0$ be the orthogonal projection of $X$ in $\m$. Let $c_\lambda$ and $c_0$ be positive real numbers associated to ${\frak b}_\lambda$ and $\m$, respectively. Lets define the inner product in $\m \oplus\n^-$ given by
\begin{equation} \label{metric B}
B (X,Y) := \sum_{\lambda \leq 0} c_\lambda \langle X_\lambda, Y_\lambda \rangle \mbox{ where } X, Y \in \m \oplus \n^-
\end{equation}

\begin{theorem}\label{teo:gradiente}
Taking $c_0$ arbitrary positive and $c_\lambda = - 2 \lambda$ for $\lambda < 0$ in equation \textnormal{\ref{metric B}}, then
\[
H \cdot= \nabla_{B} f_H
\]
that is, the induced field by $H \in \s$ in $K$ is the gradient of the height function $f_H$ with respect to the $K$-invariant metric $B$. Also,
\[
B (X,Y) = c_0 \langle X_0,Y_0 \rangle + 2 \langle [X, H_r], Y \rangle \qquad X, Y \in \m \oplus \n^-
\]
where $X_0$ and $Y_0$ are the components of $X$ and $Y$ in $\m$.
\end{theorem}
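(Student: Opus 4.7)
The plan is to verify the alternative formula for $B$ first, since that reshaping is precisely what makes the metric pair cleanly against the differential from Proposition \ref{propfuncaoaltura}, and then to check the gradient identity pointwise at an arbitrary $kb$.

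For the alternative formula I would rely on two structural observations about the Cartan inner product. Since $H_r \in \s$, the operator $\ad(H_r)$ is symmetric, so its eigenspaces $\b_\lambda$ are mutually orthogonal and each is orthogonal to $\m$. Since $\m$ centralizes $\a$, the bracket $[X_0, H_r]$ vanishes, so for $X \in \m \oplus \n^-$ one has $[X, H_r] = \sum_{\lambda < 0}(-\lambda) X_\lambda \in \n^-$. Pairing against $Y \in \m \oplus \n^-$ and applying the eigenspace orthogonality reduces the inner product to $\langle [X, H_r], Y\rangle = \sum_{\lambda<0}(-\lambda)\langle X_\lambda, Y_\lambda\rangle$. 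Doubling this produces exactly the coefficients $c_\lambda = -2\lambda$, while the $\m$-term has to be paired separately with the free constant $c_0$ because it contributes nothing to the bracket term.

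For the gradient identity I would work at an arbitrary $kb$. The vector induced by $H$ at $kb$ is $H \cdot kb = k(\pi_{\m \oplus \n^-}(k^{-1}H) \cdot b)$, so the candidate gradient is represented by $X := \pi_{\m \oplus \n^-}(k^{-1}H)$. Two facts about $k^{-1}H$ are crucial: since $k^{-1}H \in \s$ and $\m \subset \k$ is orthogonal to $\s$, we have $X_0 = 0$ and hence $X \in \n^-$ with $X_\lambda = (k^{-1}H)_\lambda$; and the relation $-\theta(k^{-1}H) = k^{-1}H$ combined with $\theta(\g_\alpha) = \g_{-\alpha}$ forces $(k^{-1}H)_{-\lambda} = -\theta(k^{-1}H)_\lambda$. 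Given a test tangent $v = k(Y \cdot b)$ with $Y \in \m \oplus \n^-$, I would re-represent it as $v = k(Z \cdot b)$ for the $\k$-lift $Z := Y_0 + Y_{\n^-} + \theta Y_{\n^-}$ (which differs from $Y$ by an element of $\n \subset \a \oplus \n$, hence defines the same tangent vector). Proposition \ref{propfuncaoaltura} then gives $df_H|_{kb}(v) = \langle [Z, H_r], k^{-1}H\rangle$, and expanding the bracket yields $[Z, H_r] = \sum_{\lambda<0}(-\lambda)(Y_\lambda - \theta Y_\lambda)$. Using eigenspace orthogonality, the fact that $\theta$ is an isometry of $\langle \cdot, \cdot\rangle$ swapping $\b_\lambda$ with $\b_{-\lambda}$, and the identity $(k^{-1}H)_{-\lambda} = -\theta(k^{-1}H)_\lambda$, the two halves of the sum contribute equally, giving $df_H|_{kb}(v) = 2\sum_{\lambda<0}(-\lambda)\langle X_\lambda, Y_\lambda\rangle$. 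On the metric side, $K$-invariance of $B$ together with $X_0 = 0$ yields $B_{kb}(k(X \cdot b), k(Y \cdot b)) = B(X, Y) = \sum_{\lambda<0}(-2\lambda)\langle X_\lambda, Y_\lambda\rangle$, so the two sides coincide.

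The main obstacle I expect is the bookkeeping between the two natural parametrizations of $T_{kb}K$: the $\k$-parametrization used by Proposition \ref{propfuncaoaltura} and the $\m \oplus \n^-$-parametrization built into $B$. The Cartan involution $\theta$ mediates between them via the lift $Y_{\n^-} \mapsto Y_{\n^-} + \theta Y_{\n^-}$, and it is precisely this doubling—combined with $k^{-1}H \in \s$ forcing $X_0 = 0$ and the $\theta$-reflection between the $\n$ and $\n^-$ components of $k^{-1}H$—that produces the factor of $2$ appearing in $c_\lambda = -2\lambda$. The constant $c_0$ is forced to remain free because the $\m$-direction plays no role in the gradient pairing and is present in $B$ only to ensure positive definiteness.
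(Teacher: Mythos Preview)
Your proposal is correct and follows essentially the same approach as the paper: you use the same $\k$-lift $Z = Y_0 + Y_{\n^-} + \theta Y_{\n^-}$ to invoke Proposition~\ref{propfuncaoaltura}, the same representation of $H\cdot kb$ via the $\m\oplus\n^-$-projection of $k^{-1}H$ (which lands in $\n^-$ because $k^{-1}H\in\s$), and the same $\theta$-computation to produce the factor of $2$. The only differences are cosmetic --- you prove the alternative formula for $B$ first rather than last, swap the roles of the letters $X$ and $Y$, and phrase the $\theta$-step via the eigenspace components $(k^{-1}H)_{-\lambda} = -\theta(k^{-1}H)_\lambda$ rather than via $\langle\theta Y_\lambda, k^{-1}H\rangle = -\langle Y_\lambda, k^{-1}H\rangle$ directly, but these are the same computation.
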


\begin{proof}
By the definition of $\nabla_B$ for $k \in K$ and $X \in \m \oplus \n^-$,
\[
B_{k b}(k (X \cdot b), \nabla_B f_H (k b)) = f_{H}'(k b) k (X \cdot b)
\]
So to prove the first statement we need to show that
\begin{equation}\label{eq0:gradaltura}
B_{k b}\left( k (X \cdot b),\, H \cdot k b \right) = f_{H}'(k b) k (X \cdot b)
\end{equation}

To evaluate the left side, let $Y_-$ be the orthogonal projection of $k^{-1}H$ in $\m \oplus \n^-$, that is parallel to $\a \oplus \n$. Note that since $k^{-1}H \in \s$ then $Y_- \in \n^-$.

Then $Y_- \cdot b = k ^{-1} H \cdot b$, and $k(Y_- \cdot b) = H \cdot k b$. Let $X = X_0 + X_-$ where $X_0 \in \m$ and $X_- \in \n^-$. By the $K$-invariance of the metric the left side is
\begin{align} \nonumber
B_{k b}\left( k ( X \cdot b),\, H \cdot kb \right) &= B_{k b}\left( k ( X \cdot b),\, k(Y_- \cdot b) \right) \\ \nonumber
&= B (X_-,Y_-) \\ 
&= \sum_{\lambda < 0} c_\lambda \langle X_\lambda ,Y_\lambda \rangle
\label{eq1:metricaborel}
\end{align}

To evaluate the right side, let $Z = X_0 + X_- + \theta X_- \in \k$. Since $\theta X_- \in \n$, then $Z \cdot b = (X_0 + X_-) \cdot b$. By Proposition \ref{propfuncaoaltura} then
\begin{equation}\label{eq2:metricaborel}
f_{H}'(k b) k (( X_0 + X_-) \cdot b)  = \langle{[Z,H_r],\, k^{-1}H \rangle}
\end{equation}
To evaluate $[Z, H_r]$, first note that
\[
[H_r, X_-] = \sum_{\lambda < 0} \ad(H_r)X_\lambda = \sum_{\lambda < 0} \lambda X_\lambda
\]
\[
[H_r, \theta X_-] = - [\theta H_r, \theta X_-] = - \theta [H_r, X_-] = - \sum_{\lambda < 0} \lambda \theta X_\lambda
\]
and that $[H_r, X_0]=0$. Then
\[
[Z, H_r] = \sum_{\lambda < 0} \lambda (\theta X_\lambda - X_\lambda)
\]
Since $k^{-1}H \in \s$, then
\[
\langle{ \theta X_\lambda ,\, k^{-1}H \rangle} = - \langle{ \theta X_\lambda ,\, \theta k^{-1}H \rangle} = - \langle{ X_\lambda ,\,
k^{-1}H \rangle}
\]
and
\[
\langle{ [Z, H_r],\, k^{-1}H \rangle} = \sum_{\lambda < 0} -2\lambda \langle{ X_\lambda ,\, k^{-1} H \rangle} = \sum_{\lambda < 0} -2\lambda \langle{ X_\lambda ,\, Y_\lambda \rangle}
\]
since $Y_- = \sum_{\lambda<0} Y_{\lambda} $ is the projection of $k^{-1}H$ at $\m \oplus \n^-$. From (\ref{eq2:metricaborel}) and since $ X = X_0 + X_- $ then
\begin{equation}\label{eq3:metricaborel}
f_{H}'(kb) k \left( X \cdot b \right) = \sum_{\lambda < 0} -2\lambda \langle{ X_\lambda ,\, Y_\lambda \rangle}
\end{equation}
Now, equation (\ref{eq0:gradaltura}) follows by (\ref{eq1:metricaborel}) and (\ref{eq3:metricaborel}), since $c_{\lambda}= -2\lambda$ for $\lambda <0$.

In order to prove the last statement, let $X_-,Y_- \in \n^-$ then
\[
\begin{array}{rcl}
B (X_-, Y_-) & = & -2 \sum_{\lambda < 0} \lambda \langle{ X_\lambda ,\, Y_\lambda \rangle} \\
& = & -2 \langle{ \sum_{\lambda < 0} \lambda X_\lambda ,\, Y_- \rangle} \\
& = & -2 \langle{ [H_r, X] ,\, Y_- \rangle} \\
& = & 2 \langle{ [X, H_r] ,\, Y_- \rangle}
\end{array}
\]
Since $Y = Y_0 + Y_-$ with $Y_0 \in \m$ then 
\[
B (X, Y) = B (X_0,Y_0) + B (X_-,Y_-)
\]
\end{proof}

The Riemannian metric $B$ constructed on the last theorem is an extension of the \emph{Borel metric} of $\F$.

\subsection{Fixed points}

In this subsection, we will describe the fixed points of $h^t$ in $K$ as orbits of $G_H^0$, \emph{the identity component} of the centralizer of $H$ in $G$. In this description, the orbit of $b$ by $M_* = N(K, \a)$ has a central role.
In order to study the fixed points in $K$ it is convenient to define the \emph{group}
\[
U := M_* / M_0
\]
This group will play a similar role to the Weyl group in the study of fixed points in flags. If we consider the subgrup
\[
C:= M / M_0
\]
we have that the Weyl group $W = M_*/M = U / C$ and we denote by $\pi$ the natural projection from $U$ to $W$. Since $M_0$ is a normal subgroup of both $M_*$ and $M$, the elements of the quotients can be view as left or right classes of $M_0$. We will denote an element of the quotients by any of its representatives. For example, $u \in U$ denotes $uM_0 = M_0u$ with $u \in M_*$. The definitions of $U$ and $C$ have some useful consequences. For $u \in U$, we have that $u A = A u$, that $u G_{H} u^{-1} = G_{wH}$, and that $u K_{H} u^{-1} = K_{wH}$, if $\pi(u) = w$, for all $H \in \a$. For $c \in C$, we have that $c H = H$, for all $H \in \a$, that $c \g_\alpha = \g_\alpha$, for any $\alpha \in \Pi$, so $c N c^{-1} = N$.
In analogy to $w^-$, the principal involution, we fix some $u^- \in U$ such that $\pi(u^-) = w^-$, so that $u^- \n = \n^-$ and $u^- N (u^-)^{-1} = N^-$.

We will show that the connected components of the set of fixed points of $h^t$ in $K$ are given by
\[
{\rm fix}(H, u) := G_H^0 u b
\]
where $G_H^0$ is the component of identity of $G_H$ and $u$ varries in $U$, since $M_0 \subset G_H^0$.

\begin{proposition}\label{propptosfixosgH}
If $K^0_H$ is the component of identity of $K_H$, then
\[
G_H^0 u b = K_H^0 u b
\]
is a connected compact submanifold of $K$.
\end{proposition}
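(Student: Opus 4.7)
The plan is to rewrite every element of $G_H^0 u b$ as an element of $K_H^0 u b$ by applying the Iwasawa decomposition twice: once inside $G_H^0$, and once inside the Weyl-translated centralizer $G_{w^{-1} H}^0$, where $w := \pi(u) \in W$.

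The starting point is that, since $A \subset \s$ is maximal abelian and contained in $\g_H$, the reductive subgroup $G_H$ inherits the Iwasawa decomposition $G_H = K_H A (G_H \cap N)$ recalled in Section~\ref{section-lie}. Taking identity components, and using that $A$ and the nilpotent group $G_H \cap N$ are connected, one obtains $G_H^0 = K_H^0 A (G_H \cap N)$; the analogous decomposition holds for $G_{w^{-1} H}^0$. Moreover, since $u \in M^*$, conjugation by $u^{-1}$ satisfies $u^{-1} G_H^0 u = G_{w^{-1} H}^0$ and $u K_{w^{-1} H}^0 u^{-1} = K_H^0$.

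Given $g \in G_H^0$, I would write $g = k a n$ with $k \in K_H^0$, $a \in A$, $n \in G_H \cap N$, so that $g u b = k (a n) u b$. The element $g' := u^{-1}(a n) u$ lies in $G_{w^{-1} H}^0$, and its Iwasawa decomposition in that group yields $g' = k'' a' n'$ with $k'' \in K_{w^{-1} H}^0$, $a' \in A$ and $n' \in G_{w^{-1} H} \cap N \subset AN$. Since $a'$ and $n'$ both fix $b$, we obtain
\[
(a n) u b \,=\, u g' b \,=\, u k'' b \,=\, (u k'' u^{-1})\, u b,
\]
and $u k'' u^{-1} \in K_H^0$, so $g u b = k (u k'' u^{-1}) u b \in K_H^0 u b$. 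Together with the trivial inclusion $K_H^0 u b \subseteq G_H^0 u b$, this establishes the equality.

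It remains to observe that $K_H^0 u b$ is a compact connected embedded submanifold of $K$. The $G$-isotropy of $u b$ is $u(AN) u^{-1}$, and since $u \in K$, the uniqueness of the Iwasawa decomposition yields $K \cap u(AN) u^{-1} = u(K \cap AN) u^{-1} = \{e\}$. Therefore the orbit map $K_H^0 \to K_H^0 u b$ is a diffeomorphism, and $K_H^0$ is compact and connected. The main delicate point I anticipate is justifying that the Iwasawa decompositions of $G_H$ and $G_{w^{-1} H}$ restrict cleanly to their identity components; once this is in place, the rest is a direct substitution exploiting the conjugation rules for $u \in M^*$.
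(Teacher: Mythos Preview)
Your proof is correct and follows essentially the same route as the paper: both reduce to the Iwasawa decomposition of $G_{w^{-1}H}$, which the paper phrases as conjugating by $u$ to obtain $G_H = K_H A (G_H \cap uNu^{-1})$ and observing that the non-compact factor fixes $ub$. Your first Iwasawa step inside $G_H^0$ is redundant---conjugating $g$ directly into $G_{w^{-1}H}^0$ already suffices---and the paper passes to identity components only at the end via continuity of the action rather than at the outset, but these are cosmetic differences.
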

\begin{proof}
In order to show the first equality, first we note that
\[
G_{w^{-1}H} = K_{w^{-1}H} A (G_{w^{-1} H} \cap N)
\]
Let $u \in U$ such that $\pi(u) = w$. By taking the conjugate of the equation above by $u$ then
\[
G_H = K_H A (G_H \cap u N u^{-1})
\]
Now, since $A u N u^{-1}u b = u A N b = u b$, it follows that
\[
G_H u b = K_H A (G_H \cap u N u^{-1}) u b \subset K_H u b
\]
Since the action of $G$ in $K$ is continuous then $G_H^0 u b \subset K_H^0 u b$ and since $K_H^0 \subset G_H^0$ then $G_H^0 u b = K_H^0 u b$.
\end{proof}

\begin{theorem}\label{teo:ptos-fixos}
The set of the fixed points of $h^t$ in $K$ are given $K_H^0Ub$.
\end{theorem}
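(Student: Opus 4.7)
The plan is to reduce the fixed-point condition on $kb \in K = G/AN$ to a condition on the adjoint orbit of $H$, and then to invoke the conjugacy fact recalled at the end of Section \ref{section-lie}. Since $h^t = \exp(tH)$ is a one-parameter flow and $K \simeq G/AN$ is a homogeneous space of $G$, the point $kb$ is fixed by $h^t$ for all $t \in \R$ if and only if $H$ lies in the isotropy subalgebra at $kb$, which by Section \ref{homogspaces} equals $\g_{kb} = k(\a \oplus \n)$; equivalently, $\Ad(k^{-1})H \in \a \oplus \n$.

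The next step is to exploit that $H \in \cl \a^+ \subset \s$. Since $\Ad(K)$ preserves the Cartan decomposition, $\Ad(k^{-1})H$ automatically lies in $\s$, so the condition becomes $\Ad(k^{-1})H \in \s \cap (\a \oplus \n)$. A brief $\theta$-computation shows this intersection equals $\a$: if $H_0 + N_0 \in \s$ with $H_0 \in \a$ and $N_0 \in \n$, applying $\theta$ yields $-N_0 = \theta N_0 \in \n^-$, forcing $N_0 \in \n \cap \n^- = 0$. Setting $Y := \Ad(k^{-1})H \in \a$, we have $\Ad(k)Y = H \in \a$, so the second conjugacy fact recalled at the end of Section \ref{section-lie} provides some $m \in M_*$ with $\Ad(k)Y = \Ad(m)Y$, that is, $m^{-1}k \in K_H$. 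Combined with the decomposition $K_H = K_H^0 M$ and the inclusion $M \subset M_*$, this gives $k \in K_H^0 M_*$, hence $kb \in K_H^0 U b$.

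For the reverse inclusion I would check directly that every point $k_0 u b$, with $k_0 \in K_H^0$ and $u \in M_*$ a representative of a class in $U$, is fixed by $h^t$. Indeed, $k_0$ commutes with $h^t$ because $K_H^0$ centralizes $H$, and $h^t u b = u \exp\bigl(t\, \Ad(u^{-1})H\bigr) b = u b$ since $u$ normalizes $\a$, so $\Ad(u^{-1})H \in \a$, which is contained in the isotropy subalgebra $\a \oplus \n$ at $b$.

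The only real subtlety I anticipate is bookkeeping: the set $K_H^0 U b$ is defined through representatives in $M_*$ of classes in $U = M_*/M_0$, so one must verify that $K_H^0 u b$ depends only on the class, not on the chosen representative. This follows from the facts that $M_0$ is connected and centralizes $\a$ (hence lies in $K_H^0$) and that $M_0$ is normal in $M_*$ (as $M$ is). Apart from this detail, the argument is a direct application of the isotropy computation together with the conjugacy facts recalled in Section \ref{section-lie}.
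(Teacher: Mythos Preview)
Your argument is correct and takes a genuinely different route from the paper's. The paper leans on the gradient structure just established (Theorem~\ref{teo:gradiente}): fixed points of $h^t$ coincide with critical points of the height function $f_H$, and computing $f_H'$ shows that $kb$ is critical iff $[kH_r, H] = 0$, i.e., the \emph{regular} element $kH_r$ centralizes $H$. The paper then invokes both conjugacy facts from the end of Section~\ref{section-lie}: first to move $kH_r$ into $\a$ by some $l \in K_H^0$, then to find $m \in M_*$ with $lkH_r = mH_r$, so that $m^{-1}lk \in K_{H_r} = M$. Your approach bypasses $f_H$ and $H_r$ entirely: you read the fixed-point condition off the isotropy subalgebra as $\Ad(k^{-1})H \in \a \oplus \n$, reduce to $\Ad(k^{-1})H \in \a$ via the observation $\s \cap (\a \oplus \n) = \a$, and then apply only the second conjugacy fact, directly to $H$. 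This is more elementary and self-contained---it needs neither the Borel metric nor the gradient identity---though of course the paper develops those anyway for Theorem~\ref{theorem1}, so the critical-point description comes at no extra cost in that context.

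One small slip to fix: from $\Ad(k)Y = \Ad(m)Y$ you obtain $m^{-1}k \in K_Y$, not $K_H$. Since both sides equal $H$, the clean statement is $km^{-1} \in K_H$, whence $k \in K_H m \subset K_H M_* = K_H^0 M_*$; your conclusion is unaffected.
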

\begin{proof}
The fixed points of $h^t$ in $K$ are the critical points of the height function $f_{H}$.
To get the critical points of $f_{H}$ we rewrite the derivative at the point $k b$, $k \in K$, in the direction $Z \in \k$, given by  Proposition \ref{propfuncaoaltura}, as follows 
\[
\begin{array}{rcl}
f_H' (k b) k (Z \cdot b)
& = & \prodi{ k [Z, H_r],\, H} \\
& = & \prodi{ [k Z, k H_r],\, H} \\
& = & - \prodi{ \ad(k H_r) k Z,\, H} \\
& = & - \prodi{ k Z,\, \ad(k H_r) H} \\
& = & - \prodi{ k Z,\, [k H_r, H] }
\end{array}
\]
where we used that $\langle \cdot, \cdot \rangle$ is $K$-invariant, $k H_r \in \s$ and $\ad(k H_r)$ is $\langle \cdot, \cdot \rangle$-symmetric. Since $k \k = \k$, then $k b$ is a critical point of $f_H$ if, and only if,
\[
\prodi{ Z,\, [k H_r, H]} = 0, \quad\text{ for all }Z \in \k
\]
Since $k H_r$ and $H \in \s$ then $[k H_r, H] \in \k$ and, since $\prodi{\cdot , \cdot}$ is an inner product in $\k$, then the previous equation is true if, and only if, $[k H_r, H] = 0$. Then $k b$ is a critical point of $f_H$ if, and only if, $k H_r$ centralizes $H$.
Since $[kH_r, H] = 0$ for $k \in M_*$, we have that the orbit $M_*b$ consists of critical points of $f_H$. Since $f_H$ is $K_H$-invariant to the left and since $M_0 \subset K_H^0$, then the orbit $K_H^0 Ub$ consists of critical points. In fact, the points of $K_H b$ are all the critical points of $f_H$. Indeed, if $k b$ is critical, $k \in K$, from the previous argument then $k H_r$ centralizes $H \in \cl \a^+$. There is $l \in K_H^0 $ such that $l k H_r \in \a$ and thus there is $m \in M_*$ such that $l k H_r = m H_r$. Then $m^{-1} l k H_r = H_r$ and
\[
m^{-1} l k \in K_{H_r} = M
\]
Hence
\[
k = l^{-1} m ( m^{-1}l k ) \in K_H^0 M_* = K_H^0 U
\]
and thus $K_H^0 U b$ are the only critical points of $f_H$.
\end{proof}

In order to determine the distinct fixed point components, we need to introduce the following subgroup of $U$, given by
\[
U_H: = \frac{K_H^0 \cap M_*}{M_0}
\]

\begin{corollary}\label{corol:fix-flag}
${\rm fix}(H, u) = {\rm fix}(H, v)$ if and only if $v \in U_H u$.
\end{corollary}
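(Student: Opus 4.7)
The plan is to translate the set equality ${\rm fix}(H,u)={\rm fix}(H,v)$ into the language of cosets via Proposition~\ref{propptosfixosgH}, which gives ${\rm fix}(H,u)=K_H^0\,ub$. So I want to show $K_H^0\,ub=K_H^0\,vb$ if and only if $vu^{-1}M_0\in U_H$ (using representatives $u,v\in M_*$). Before tackling either direction I would record the well-definedness observation: $M_0\subset M\subset K_H$ is connected, so $M_0\subset K_H^0$; moreover $u\in M_*$ normalizes $M$, hence normalizes $M_0$, so $uM_0u^{-1}\subset M_0\subset K_H^0$ and therefore $K_H^0\,u\,m_0=K_H^0\,u$ for every $m_0\in M_0$. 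This shows $K_H^0\,ub$ depends only on the class $uM_0\in U$, so working with coset representatives is legitimate.

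For the ``if'' direction, given $v\in U_H u$ I would write $vM_0=k'uM_0$ with $k'\in K_H^0\cap M_*$, so $v=k'um_0$ for some $m_0\in M_0$. Then
\[
K_H^0\,vb=K_H^0\,k'um_0\,b=K_H^0\,um_0\,b=K_H^0\,ub,
\]
the first equality because $k'\in K_H^0$ and the last by the well-definedness observation. Hence ${\rm fix}(H,v)={\rm fix}(H,u)$.

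For the ``only if'' direction, suppose $K_H^0\,ub=K_H^0\,vb$. Then $vb=k\,ub$ for some $k\in K_H^0$, so $u^{-1}k^{-1}v$ lies in the isotropy $G_b=AN$. But $u,v\in M_*\subset K$ and $k\in K_H^0\subset K$, so $u^{-1}k^{-1}v\in K\cap AN=\{e\}$; this is the critical step and uses that the Iwasawa decomposition $G=KAN$ is a direct product decomposition. Hence $v=ku$ and therefore $vu^{-1}=k\in K_H^0$; but also $vu^{-1}\in M_*$ since $M_*$ is a group containing $u,v$. Thus $vu^{-1}\in K_H^0\cap M_*$, which gives $vM_0=(vu^{-1})\,uM_0\in U_H\,uM_0$, i.e. $v\in U_H u$.

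The only real subtlety is making sure representatives behave correctly: namely, that $M_0$ is normal in $M_*$ (so that $U=M_*/M_0$ is actually a group and $U_H u$ makes sense as a coset) and that $K_H^0\,ub$ depends only on the class of $u$ in $U$; once those are in hand, the argument is essentially a clean application of $K\cap AN=\{e\}$ plus the inclusion $M_0\subset K_H^0$. I do not expect a genuine obstacle beyond keeping these bookkeeping items straight.
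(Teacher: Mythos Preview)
Your proof is correct and follows essentially the same route as the paper: both directions hinge on ${\rm fix}(H,u)=K_H^0ub$, and the ``only if'' part is settled by exactly the same use of $K\cap AN=\{e\}$ to conclude $v=ku$ with $k\in K_H^0\cap M_*$. Your additional remarks on well-definedness (that $M_0$ is normal in $M_*$ and contained in $K_H^0$) are more explicit than what the paper writes, but the underlying argument is identical.
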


\begin{proof}
On the one hand, if $v \in U_H u$, then there are $v = u'u$, where $u' \in U_H$. Thus $u' \in K_H^0 \cap M_*$. Hence
\[
{\rm fix} (H, v) = K_H^0 v b = K_H^0 u'u b = K_H^0 u b = {\rm fix} (H,u)
\]
On the other hand, ${\rm fix} (H, v) = {\rm fix} (H,u)$, then $K_H^0 v b = K_H^0 u b$ and thus $vb = kub$, where $u' \in K_H^0$.Thus $b = v^{-1} u' u b $ so $v^{-1} u' u \in K \cap A N = 1$ and $v = u' u$. Hence $u' = vu^{-1} \in K_H^0 \cap M_*$ and thus $v \in U_H u$.
\end{proof}

\subsection{Bruhat Decomposition}

In this subsection, we show that the stable and unstable sets of each component of fixed points ${\rm fix}(H, u)$ are given respectively by the following orbits
\[
N^-_H {\rm fix}(H, u)
\qquad
\mbox{and}
\qquad
N^+_H {\rm fix}(H, u)
\]
This will provide a decomposition of $K$ which we regard as a general Bruhat decomposition. First we recall the following result about the dynamics of $h^t$ acting on the subalgebras $\n^\pm_H$, which is exactly Lemma 3.3 of \cite{Patrao1}.

\begin{lemma}\label{lemmadecaimentoexp}
Let $h = \exp H$, where $H \neq 0$. Then
\[
|h^t Y| \leq {\rm e}^{-\mu t} |Y| \quad \mbox{for } Y \in \n^-_H, \, t \geq 0
\]
and
\[
|h^{-t} Y| \leq {\rm e}^{- \mu t} |Y| \quad \mbox{for } Y \in \n^+_H, \, t \geq 0
\]
where
\[
\mu = \min\{\alpha(H) :  \alpha(H)>0, \, \alpha \in \Pi\}
\]
\end{lemma}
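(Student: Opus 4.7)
The proof will exploit the fact that $\n^\pm_H$ decompose into root spaces, each of which is an eigenspace of $\ad(H)$, and that distinct root spaces are orthogonal with respect to the Cartan inner product $\langle \cdot, \cdot \rangle$. The plan is to reduce the claimed decay estimate to a purely diagonal computation in the root space decomposition.

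First, I recall that by the definition in Section \ref{section-lie},
\[
\n^-_H = \sum_{\alpha(H) < 0} \g_\alpha \quad \text{and} \quad \n^+_H = \sum_{\alpha(H) > 0} \g_\alpha,
\]
and that with respect to the Cartan inner product the sum on the right hand side is orthogonal, since root spaces corresponding to distinct roots are orthogonal. Second, I would observe that since $\Ad(h^t) = \Ad(\exp(tH)) = \exp(t\ad(H))$ and each $\g_\alpha$ is an eigenspace of $\ad(H)$ with eigenvalue $\alpha(H)$, one has
\[
\Ad(h^t) Y_\alpha = e^{t\alpha(H)} Y_\alpha \quad \text{for } Y_\alpha \in \g_\alpha.
\]
Here I am implicitly identifying $h^t Y$, in the statement of the lemma, with the adjoint action $\Ad(h^t) Y$, which is the natural action of $h^t$ on $\g$.

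Third, for the decay on $\n^-_H$, I take $Y \in \n^-_H$ and write it as an orthogonal sum $Y = \sum_{\alpha(H) < 0} Y_\alpha$ with $Y_\alpha \in \g_\alpha$. Then
\[
|h^t Y|^2 = \sum_{\alpha(H) < 0} e^{2 t \alpha(H)} |Y_\alpha|^2.
\]
Since $\alpha(H) < 0$ implies $-\alpha(H) \geq \mu$, i.e.\ $\alpha(H) \leq -\mu$, for every $t \geq 0$ we get $e^{2t\alpha(H)} \leq e^{-2\mu t}$, and hence $|h^t Y|^2 \leq e^{-2\mu t}|Y|^2$, which yields the first inequality after taking square roots.

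Fourth, the inequality on $\n^+_H$ follows from an entirely symmetric argument: for $Y \in \n^+_H$ write $Y = \sum_{\alpha(H) > 0} Y_\alpha$, note $\Ad(h^{-t}) Y_\alpha = e^{-t\alpha(H)} Y_\alpha$, and use that $\alpha(H) \geq \mu$ on the support of the sum to bound $e^{-2t\alpha(H)} \leq e^{-2\mu t}$ for $t \geq 0$. There is no real obstacle here; the only point requiring care is the orthogonality of distinct root spaces with respect to the chosen Cartan inner product, which I would cite as a standard fact from the setup of Section \ref{section-lie} rather than reprove.
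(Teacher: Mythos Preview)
Your argument is correct and is the natural direct proof: decompose $Y$ orthogonally into root space components, use that $\Ad(h^t)$ acts on $\g_\alpha$ by the scalar $e^{t\alpha(H)}$, and bound each eigenvalue by $e^{-\mu t}$ via the definition of $\mu$. Note that the paper itself does not give a proof of this lemma; it simply quotes it as Lemma~3.3 of \cite{Patrao1}, so there is no in-paper argument to compare against. Your proof is exactly the standard one and is almost certainly what appears in the cited reference.
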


Consider $N^\pm(H)$ the connected subgroups with the following Lie algebras
\[
\frak{n}^\pm(H) = \sum_{\alpha \in \Pi^\pm: \alpha(H) = 0} \g_\alpha
\]
Note that $N^\pm(H)$ is a subgroup of $G_H^0$, since $\frak{n}^\pm(H)$ is clearly a subalgebra of $\frak{g}_H$. Now we prove the following result relating the subgroups $N^\pm$, $N^\pm_H$ and $N^\pm(H)$.

\begin{lemma}
\label{lemmaestavel}
It follows that:
\begin{enumerate}[(i)]
\item $N^\pm_H = \{ n \in N^-:\, \lim_{t \to \pm \infty} h^t n h^{-t} = 1 \}$.

\item $N^\pm = N^\pm_H N^\pm(H)$.

\item $N^\pm {\rm fix}(H, u) = N^\pm_H {\rm fix}(H, u)$.

\item If $y \in N^\pm_H x$, for $x \in {\rm fix}(H, u)$, then
\[
\lim_{t \to \pm \infty} h^t y = x
\]
\end{enumerate}
\end{lemma}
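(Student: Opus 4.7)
The plan is to prove parts (ii), (i), (iii), (iv) in that order. I work with $N^-$ and $N^-_H$; the $N^+$ case is symmetric under replacing $H$ by $-H$. Throughout I use that $\exp$ is a diffeomorphism on each of the simply connected nilpotent groups $N^-$, $N^-_H$, and $N^-(H)$.

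For (ii), I first observe that $\n^-_H$ is an ideal of $\n^-$: if $\alpha(H)<0$ and $\beta(H)\le 0$, then $(\alpha+\beta)(H)<0$, so $[\g_\alpha,\g_\beta]\subset\g_{\alpha+\beta}\subset\n^-_H$; and $\n^-(H)$ is a subalgebra by the same root count. Combined with the vector space decomposition $\n^-=\n^-_H\oplus\n^-(H)$, this presents $N^-$ as the semidirect product $N^-_H\rtimes N^-(H)$, so the product map $N^-_H\times N^-(H)\to N^-$ is a diffeomorphism and $N^-=N^-_H N^-(H)$.

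For (i), the inclusion $\supset$ goes through the exponential: write $n=\exp Y$ with $Y\in\n^-_H$, so $h^t n h^{-t}=\exp(\Ad(h^t)Y)$, and Lemma \ref{lemmadecaimentoexp} gives $|\Ad(h^t)Y|\le e^{-\mu t}|Y|\to 0$ as $t\to+\infty$. For the reverse inclusion, given $n\in N^-$ with $h^t n h^{-t}\to 1$, use (ii) to write $n=n_1 n_0$ with $n_1\in N^-_H$ and $n_0\in N^-(H)\subset G_H$; since $n_0$ commutes with $h^t$, we get $h^t n h^{-t}=(h^t n_1 h^{-t})\,n_0\to n_0$, forcing $n_0=1$ and hence $n\in N^-_H$.

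Part (iii) then follows from (ii) and Proposition \ref{propptosfixosgH}: since ${\rm fix}(H,u)=G_H^0 u b$ and $N^-(H)\subset G_H^0$, we have $N^-(H){\rm fix}(H,u)\subset{\rm fix}(H,u)$, and therefore $N^-{\rm fix}(H,u)=N^-_H N^-(H){\rm fix}(H,u)=N^-_H{\rm fix}(H,u)$. For (iv), if $y=nx$ with $n\in N^-_H$ and $x\in{\rm fix}(H,u)$, then $h^t x=x$ gives $h^t y=(h^t n h^{-t})x\to x$ by (i). The main obstacle I anticipate is verifying the semidirect-product structure in (ii) rigorously — once the ideal observation is in place and one invokes the simply connected nilpotent theory, all remaining parts slot in without essential difficulty.
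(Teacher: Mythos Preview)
Your argument is correct, but the logical order and the mechanism for (ii) differ from the paper's. The paper proves (i) first by a direct exponential computation: writing an arbitrary $n\in N^-$ as $\exp(Y_1+Y_2)$ with $Y_1\in\n^-_H$, $Y_2\in\n^-(H)$, it computes $h^t n h^{-t}=\exp(h^tY_1+Y_2)\to\exp(Y_2)$, so the limit is $1$ iff $Y_2=0$. It then deduces (ii) from (i) by the trick of showing $n\exp(-Y_2)\in N^-_H$ via the characterization in (i). You instead establish (ii) first by recognizing $\n^-_H$ as an ideal with complementary subalgebra $\n^-(H)$ and invoking the semidirect-product structure of simply connected nilpotent groups, and then use the factorization to prove (i). Your route is more structural and makes the reverse inclusion in (i) a one-line consequence; the paper's route is more self-contained, needing only that $\exp:\n^-\to N^-$ is a diffeomorphism rather than the semidirect-product theory. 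Parts (iii) and (iv) are handled identically in both.
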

\begin{proof}
We prove the results only when the superscript is $-$. The proof when the superscript is $+$ is entirely analogous. To prove (i), first we use that $\exp: \n^- \to N^-$ is a homeomorphism, and that $\n^- = \n^-_H \oplus \n^- (H)$, since $H \in \mathrm{cl}(\a^+)$. Then for any $n \in N^-$, $n = \exp(Y_1 + Y_2)$ with $Y_1 \in \n^-_H$ and $Y_2 \in \n^- (H)$, from Lemma \ref{lemmadecaimentoexp} then
\begin{equation}
\label{eqlemmaestavel}
h^t n h^{-t} = \exp(h^t Y_1 + Y_2) \to \exp(Y_2) \in N^- (H)
\end{equation}
when $t \to \infty$, since $h^t$ centralizes $\n^- (H)$. So that $h^t n h^{-t} \to 1$, if and only if, $Y_2 = 0$, that is equivalent to $Y = Y_1 \in \n^-_H$ and $n \in N^-_H$. To prove that $N^- = N^-_H N^- (H)$ let us prove the inclusion ``$\subset$''since the other side is immediate. Let $n \in N^-$ and $n = \exp(Y_1 + Y_2)$ as previously. Consider $n \exp(-Y_2) \in N^-$, since $h^t$ centralizes $Y_2$ then by equation (\ref{eqlemmaestavel})
\[
h^t (n \exp(-Y_2)) h^{-t} = (h^t n h^{-t}) \exp(-Y_2) \to  \exp(Y_2)  \exp(-Y_2) = 1
\]
so that from item (i), $n \exp(-Y_2) = n_1 \in N^-_H$, and $n = n_1 n_2$ where $n_2 = \exp(Y_2) \in N^- (H)$.
Now (iii) is a immediate consequence o (ii), since ${\rm fix}(H, u) = G_H^0ub$ and $N^-(H) \subset G_H^0$.
For (iv), let $y = n x$, with $n \in N^-_H$ and $h^{-t} x = x$, then from (i)
\[
 h^t y = h^t n h^{-t} x \to x
\]
\end{proof}

Now we prove that

\begin{corollary}\label{teovarestavel}
The stable set of the fixed points ${\rm fix}(H, u)$ is given by
\[
{\rm st}(H, u) = N^-_H {\rm fix}(H, u)
\]
while the unstable set of the fixed points ${\rm fix}(H, u)$ is given by
\[
{\rm un}(H, u) = N^+_H {\rm fix}(H, u)
\]
and
\[
K = \bigcup_{u \in U} {\rm st}(H, u)
\]
\end{corollary}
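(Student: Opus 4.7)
The plan is to establish, in order: (a) the ``easy'' inclusions $N^-_H\,{\rm fix}(H,u) \subset {\rm st}(H,u)$ and $N^+_H\,{\rm fix}(H,u) \subset {\rm un}(H,u)$; (b) the cover $K = \bigcup_{u \in U} {\rm st}(H,u)$; and (c) the reverse inclusion ${\rm st}(H,u) \subset N^-_H\,{\rm fix}(H,u)$. The corresponding reverse inclusion for the unstable set will then follow by applying (c) to the hyperbolic element $-H$, since $\n^+_H = \n^-_{-H}$, $K_H = K_{-H}$, and the backward flow of $h^t$ coincides with the forward flow of $\exp(-tH)$.

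Part (a) is immediate from Lemma \ref{lemmaestavel}(iv): if $y = nx$ with $n \in N^-_H$ and $x \in {\rm fix}(H,u)$, then $h^t y \to x$, so $\omega(y) = \{x\} \subset {\rm fix}(H,u)$. For (b), by Theorem \ref{teo:gradiente} the flow $h^t$ is gradient for $f_H$, so for every $y \in K$ the set $\omega(y)$ is a nonempty, connected subset of the critical set of $f_H$; by Theorem \ref{teo:ptos-fixos} and Corollary \ref{corol:fix-flag} this critical set equals $\bigsqcup_{u \in U_H \backslash U} {\rm fix}(H,u)$, so $\omega(y)$ lies in a single component ${\rm fix}(H,u)$, whence $y \in {\rm st}(H,u)$. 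As a by-product the stable sets attached to distinct $U_H$-cosets are pairwise disjoint.

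The crux is (c), which I carry out via the $h^t$-equivariant $M$-principal bundle projection $\pi_\F : K \to \F$, $kb \mapsto k b_0$. Given $y = kb \in {\rm st}(H,u)$ with $h^t y \to k_0 u b \in K_H^0 u b$, equivariance yields $h^t \pi_\F(y) \to k_0 w b_0 \in K_H w b_0 = {\rm fix}_\F(H,w)$ where $w = \pi(u) \in W$. The classical flag-manifold result recalled in Subsection \ref{subsec-translations} then gives $\pi_\F(y) \in N^-_H\,{\rm fix}_\F(H,w) = N^-_H K_H w b_0$, so we may write $\pi_\F(y) = n k' w b_0$ with $n \in N^-_H$, $k' \in K_H$, and $w$ a chosen representative in $M^*$. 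The equality $kb_0 = n k' w b_0$ in $G/MAN$ forces $k = n k' w m_1 a_1 n_1$ for some $m_1 \in M$, $a_1 \in A$, $n_1 \in N$, hence
\[
y = kb = n k' w m_1 b
\]
since $a_1 n_1 b = b$. Decomposing $k' = k_0'' m_0$ via $K_H = K_H^0 M$ with $k_0'' \in K_H^0$, $m_0 \in M$, and setting $v = m_0 w m_1 \in M^*$, I obtain $y = n k_0'' v b \in N^-_H K_H^0 v b = N^-_H\,{\rm fix}(H,[v])$, where $[v] = v M_0 \in U$.

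By part (a) this places $y \in {\rm st}(H,[v])$; combined with $y \in {\rm st}(H,u)$ and the disjointness from (b), this forces $U_H[v] = U_H u$, so ${\rm fix}(H,[v]) = {\rm fix}(H,u)$ by Corollary \ref{corol:fix-flag}, and therefore $y \in N^-_H\,{\rm fix}(H,u)$. I expect the main obstacle to be bookkeeping between the two quotients $U = M^*/M_0$ and $W = M^*/M$ when lifting a decomposition from $\F$ back to $K$; this is handled by the identities $K_H = K_H^0 M$, the fact that $M$ is normal in $M^*$ (so $v = m_0 w m_1 \in M^*$ is a legitimate element), and $M_0$ being normal in $M^*$ (so the class $[v] \in U$ is well-defined), together with the disjointness from (b) which pins down the correct $U_H$-coset at the end.
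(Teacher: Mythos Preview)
Your proof is correct and follows essentially the same route as the paper: both obtain the result by lifting the known stable-manifold/Bruhat decomposition of $\F$ through the $M$-bundle $\pi_\F: K \to \F$, using $K_H = K_H^0 M$ to resolve the fibre ambiguity. The only organizational difference is that you first establish the cover $K = \bigcup_u {\rm st}(H,u)$ via the gradient structure and then prove the reverse inclusion ${\rm st}(H,u) \subset N^-_H\,{\rm fix}(H,u)$ pointwise, whereas the paper computes $\pi_\F^{-1}({\rm st}_\F(H,w)) \subset \bigcup_u N^-_H\,{\rm fix}(H,u)$ set-wise and obtains $K = \bigcup_u N^-_H\,{\rm fix}(H,u)$ directly, leaving the per-component equality implicit from this cover together with the easy inclusion.
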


\begin{proof}
We prove the results only for the stable set. The proof for the unstable set is entirely analogous. By Lemma \ref{lemmaestavel}, we have that $N^-_H {\rm fix}(H, u) \subset {\rm st}(H, u)$. Besides this, if $\pi$ denotes the projection from $K$ to $\mathbb{F} = K/M$ and the projection from $U$ to $W = U/C$, we have that
\[
\pi\left(N^-_H {\rm fix}(H, u)\right)
= N^-_H {\rm fix}_\F(H, w)
= {\rm st}_\F(H, w)
\]
where $w = \pi(u)$. Since ${\rm fix}_\F(H, w) = K_H^0wb_0$, we have that
\[
\pi^{-1}\left({\rm st}_\F(H, w)\right)
= N^-_HK_H|^0Mub \subset \bigcup_{u \in U} N^-_H {\rm fix}(H, u)
\]
Since
\[
\mathbb{F} = \bigcup_{w \in W} {\rm st}_\F(H, w)
\]
it follows that
\[
K = \bigcup_{w \in W} \pi^{-1}\left({\rm st}_\F(H, w)\right) \subset \bigcup_{u \in U} N^-_H {\rm fix}(H, u)
\]
which completes the proof.
\end{proof}

Now we determine the Bruhat decomposition. First note that stable manifolds are disjoint if and only the corresponding fixed points are disjoint so that to study the intersection of stable sets we only need to know the intersection of the corresponding fixed points sets.

Let us now also define \emph{the subgroup} $C_H$ of $C$.
\[
C_H: = \frac{K_H^0 \cap M}{M_0}
\]

In the general case then the set of fixed points $\{ {\rm fix}(H, u) : u \in U \}$ are in bijection with the quotient $U_H \backslash U $. Note that also generally, $U_H$ is not normal in $U$ and this quotient is not a group but only right cosets. These cosets can then be used to enumerate the components of fixed points.

\begin{theorem}\label{theorem:elemento-nao-reg}
Let $h^t$ be a hyperbolic flow in $K$.
\begin{enumerate}[$(i)$]

\item The set of fixed points is the disjoint union of connected components
\[
{\rm fix}(h^t) = \uniaodisj \left\{ {\rm fix}(H, u) : \, u \in U_H \backslash U \right\}
\]
where ${\rm fix} (H, u) = K_H^0 u b $. The attractors are given by ${\rm fix} (H, c)$ and the repellers are given by ${\rm fix}(H, c u^-)$, where ${c \in C_H \backslash C }$ and $\pi(u^-) = w^-$.

\item The group $K$ decomposes as the disjoint union of stable manifolds,

\begin{equation}\label{eq:decompos-dinamica-geral-flag}
K = \uniaodisj \left\{ {\rm st} (H, u) : \, u \in U_H \backslash U \right\}
\end{equation}

where each ${\rm st} (H, u) = N^-_H {\rm fix} (H, u) = N^-_H K_H^0 u b $. Also, the union of the stable manifolds ${\rm st} (H, c)$, where ${c \in C_H \backslash C}$, is open and dense in $K$.
\end{enumerate}
\end{theorem}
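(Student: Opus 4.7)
The plan is to deduce both parts by combining three earlier ingredients: the parameterization of fixed points in Theorem \ref{teo:ptos-fixos} together with Corollary \ref{corol:fix-flag}, the identification of stable sets in Corollary \ref{teovarestavel}, and the $M$-principal bundle projection $\pi: K \to \F$ which lets us transfer the Bruhat decomposition on the maximal flag manifold to $K$.

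For part (i), Theorem \ref{teo:ptos-fixos} already gives ${\rm fix}(h^t) = K_H^0 U b$, and each piece ${\rm fix}(H,u) = K_H^0 u b$ is the continuous image of the connected set $K_H^0$, hence connected. Corollary \ref{corol:fix-flag} then shows that two such pieces coincide iff the parameters lie in the same right $U_H$-coset, producing the disjoint union indexed by $U_H\backslash U$. To classify attractors and repellers I would evaluate the height function $f_H$ on each component: for $k \in K_H^0$ the element $k$ commutes with $H$, so
\[
f_H(kub) = \langle kuH_r, H \rangle = \langle uH_r, k^{-1}H \rangle = \langle H_r, \pi(u)^{-1} H \rangle,
\]
which shows $f_H$ is constant on ${\rm fix}(H,u)$ with value depending only on $w = \pi(u)\in W$. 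Since $H_r \in \a^+$ and $H \in \cl\a^+$, standard Weyl-group convexity gives that this value is maximized exactly at $w = 1$ and minimized exactly at $w = w^-$. Lifting back, the attractor components correspond to $u \in C$ and the repeller components to $u \in Cu^-$; modding by the isotropy $U_H \cap C = C_H$ gives the indexing by $C_H\backslash C$.

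For part (ii), the union $K = \bigcup_u {\rm st}(H,u)$ is Corollary \ref{teovarestavel}. For disjointness, suppose ${\rm st}(H,u) \cap {\rm st}(H,v) \neq \emptyset$; a point in the intersection has a forward $h^t$-limit lying in both ${\rm fix}(H,u)$ and ${\rm fix}(H,v)$, so these fixed-point components coincide, and Corollary \ref{corol:fix-flag} forces $v \in U_H u$. Hence the stable sets are disjoint when indexed by $U_H\backslash U$, and each equals $N_H^- K_H^0 u b$ by Lemma \ref{lemmaestavel}(iii). For openness and density of the attractor stable set, I would use the projection $\pi: K \to \F$, which is an $M$-principal bundle: under $\pi$ the flow $h^t$ intertwines with the translation on $\F$, and the stable set of the attractor fixed point set on $\F$ is the open and dense big Bruhat cell $N_H^- K_H^0 b_0$. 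Its preimage under $\pi$ is open and dense in $K$, and unraveling the $M = M_0 C$ fiber structure one sees this preimage is exactly the union $\bigcup_{c \in C_H\backslash C}{\rm st}(H,c)$.

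The main obstacle I anticipate is the last step: pulling back the big Bruhat cell from $\F$ and verifying that the preimage equals precisely the union of the stable sets indexed by $C_H\backslash C$ and not something larger. This amounts to showing $N_H^- K_H^0 M b = N_H^- K_H^0 C b$ inside $K$, which requires using $M = M_0 C$ together with $M_0 \subset K_H^0$ to absorb the connected component of $M$ into $K_H^0$. Once this identification is in place, the density and openness transfer transparently from $\F$ to $K$, completing the Bruhat-type decomposition.
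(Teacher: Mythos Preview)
Your decomposition arguments for (i) and (ii) match the paper's: Theorem~\ref{teo:ptos-fixos} plus Corollary~\ref{corol:fix-flag} for the indexing by $U_H\backslash U$, Corollary~\ref{teovarestavel} for the stable-set union, disjointness from the fact that distinct fixed components have disjoint basins, and the pullback of the big cell along $\pi:K\to\F$ together with $M=M_0C$, $M_0\subset K_H^0$ for the open-dense claim. That is exactly what the paper does, including the identification $N_H^-K_H^0Mb=N_H^-K_H^0Cb$ that you flag as the main obstacle.

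Where you diverge is the identification of attractors and repellers. The paper reads this off \emph{from} (ii): the preimage $\pi^{-1}({\rm st}_\F(H,1))=\bigcup_{c\in C}{\rm st}(H,c)$ is open and dense, and since the pieces ${\rm st}(H,c)$ are pairwise diffeomorphic (right translation by elements of $C$ normalizes $AN$ and permutes them), each one is individually open; hence each ${\rm fix}(H,c)$ is an attractor, while every other stable set sits in the nowhere-dense complement and cannot be open. Your route via the height function has a gap as written: knowing that $w\mapsto\langle wH_r,H\rangle$ is globally maximized exactly at $w=1$ does show that the components ${\rm fix}(H,c)$ are attractors, but it does not exclude that some component with $\pi(u)=w\neq 1$ is a \emph{local} maximum of $f_H$ on $K$ and hence also an attractor---comparing values on the finite set $W$ says nothing about the behavior of $f_H$ in directions normal to the component inside $K$. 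To close this you would need, for instance, to project to $\F$ (attractors on $K$ map to attractors on $\F$, where only $w=1$ survives) or to note that for $w\neq 1$ the set ${\rm st}(H,u)$ projects onto a lower-dimensional Bruhat cell and is therefore not open. Either fix reintroduces the paper's mechanism, so it is cleaner to reverse your order: establish the openness in (ii) first and deduce the attractor list from it.
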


\begin{proof}
The first statement of itens $(i)$ and $(ii)$ follows from Theorem \ref{teo:ptos-fixos} and Corollaries \ref{corol:fix-flag} and \ref{teovarestavel}, noting that stable manifolds of disjoint sets are disjoint.

For the second statement of item $(ii)$, we have that ${\rm fix}_\F(H, w)$ is the unique atractor of $h^t$ on $\mathbb{F}$ and that ${\rm st}_\F(H, 1)$ is open and dense in $\mathbb{F}$. Thus, if $\pi$ denotes the projection from $K$ to $\mathbb{F} = K/M$, we have that
\[
\pi^{-1}\left({\rm st}_\F(H, 1)\right)
= N^-_HK_H^0Mb = N^-_HK_H^0Cb
\]
is open and dense in $K$. Since $N^-_HK_H^0Cb$ is a finite union of orbits $N^-_HK_H^0cb$, where $c \in C$, at least one of them is open and thus all of them are open, since they are all diffeomorphic to each other. Hence the attractors are ${\rm fix} (H, c)$ for ${c \in C_H \backslash C }$, since $C \cap U_H = C_H$. The proof for the repellers is similar, since they are the attractors of $\left(h^{-1}\right)^t$, and that $-w^-H \in {\rm cl}\mathfrak{a}^+$.
\end{proof}

Note that item (ii) from the previous Theorem illustrates that the height function on $K$ with respect to $H$ has $|M/M_0|=|C|$ components that assume the maximum and $|C|$ components that assume the minimum.

\section{General translations}

In this section, we describe the dynamics of a general translation $g^t$, given either by the iteration of some $g \in G$, for $\mathbb{T} = \Z$, or by $\exp(tX)$, for $\mathbb{T} = \R$. We show that the minimal Morse decomposition of $g^t$, as well their stable and unstable manifolds, is determined by its hyperbolic component $h^t = \exp(tH)$ and that each minimal Morse component is normally hyperbolic.

\subsection{Minimal Morse decomposition and recurrence}

We will show that the minimal Morse components of $g^t$ in $K$ are given by
\[
{\cal M}(g^t,u) := {\rm fix}(H, u) \cup g{\rm fix}(H, u)
\]
where $g = g^1 \in G_H$.

\begin{proposition}
There exists $c_g \in C$ such that $gG_H^0 = G_H^0c_g$, $g{\rm fix}(H, u) = {\rm fix}(H, c_gu)$, and
\[
{\cal M}(g^t,u) = G_H^g u b = K_H^g u b
\]
where
\[
G_H^g := G_H^0 \cup gG_H^0
\qquad \mbox{and} \qquad
K_H^g := K \cap G_H^g
\]
are subgroups of $G_H$. If $g \in G_H^0$, which always happens when $\mathbb{T} = \R$, then ${\cal M}(g^t,u)$ is connected and equal to ${\rm fix}(H, u)$. If $g \notin G_H^0$, then ${\cal M}(g^t,u)$ has two connected components ${\rm fix}(H, u)$ and $g{\rm fix}(H, u)$. In both cases, ${\cal M}(g^t,u)$ is invariant by $g^t$.
\end{proposition}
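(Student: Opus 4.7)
The plan is to use the Jordan decomposition $g = ehu$ together with the structural identity $G_H = G_H^0 M$ to reduce each claim to an analysis inside the component group $M/M_0 = C$.

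First I would establish the existence of $c_g$ and the three identities. Since $h = \exp H$ and $u = \exp N$ both lie in $\exp(\g_H) \subseteq G_H^0$, and since the elliptic piece lies in $K_H = K_H^0 M$, writing $e = k_0 m$ with $k_0 \in K_H^0 \subseteq G_H^0$ and $m \in M$ yields $gG_H^0 = mG_H^0 = G_H^0 m$, using that $G_H^0$ is normal in $G_H$; setting $c_g := mM_0 \in C$ gives the first identity. In the case $\t = \R$, all Jordan pieces of $X$ lie in $\g_H$, so $g = \exp X \in G_H^0$ directly and $c_g$ can be taken trivial. Using ${\rm fix}(H, u) = G_H^0 ub$ from Proposition \ref{propptosfixosgH}, one computes $g\,{\rm fix}(H, u) = G_H^0 c_g ub = {\rm fix}(H, c_g u)$, giving
\[
{\cal M}(g^t, u) = G_H^0 ub \cup G_H^0 c_g ub = G_H^g ub.
\]
For the equality with $K_H^g ub$, observe that $c_g \in M \subseteq K$ and that $k_0 c_g \in K$ forces $k_0 \in K \cap G_H^0 = K_H^0$, so $K \cap G_H^0 c_g = K_H^0 c_g$, whence $K_H^g = K_H^0 \cup K_H^0 c_g$ and $K_H^g ub = G_H^g ub$.

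The main obstacle is the subgroup assertion, which reduces to showing $g^2 \in G_H^0$, equivalently $m^2 \in G_H^0 \cap M$. I would invoke the standard structural result that for a connected semisimple Lie group with finite center the component group $M/M_0$ is an elementary abelian $2$-group (see Knapp \cite{knapp}), so $m^2 \in M_0 \subseteq G_H^0$. With this in hand, $(gG_H^0)(gG_H^0) = g^2 G_H^0 = G_H^0$ and $(gG_H^0)^{-1} = G_H^0 m^{-1} = G_H^0 m = gG_H^0$, so $G_H^g = G_H^0 \cup gG_H^0$ is closed under products and inverses and is a subgroup of $G_H$; then $K_H^g = K \cap G_H^g$ is automatically a subgroup.

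Finally, for connectedness and invariance: each set $G_H^0 ub$ is connected as a continuous image of $G_H^0$. If $g \in G_H^0$, then $g\,{\rm fix}(H, u) = {\rm fix}(H, u)$ and ${\cal M}(g^t, u)$ equals the single connected component ${\rm fix}(H, u)$. If $g \notin G_H^0$, then $c_g \notin U_H$, since $c_g \in U_H$ would force $m \in K_H^0 M_0 = K_H^0 \subseteq G_H^0$ and hence $g \in G_H^0$; Corollary \ref{corol:fix-flag} then gives ${\rm fix}(H, u) \neq {\rm fix}(H, c_g u)$, so these are two distinct (hence disjoint) connected components of ${\cal M}(g^t, u)$. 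Invariance of ${\cal M}(g^t, u)$ under $g^t$ follows from $g^2 \in G_H^0$: for $\t = \Z$ the iterate $g^t$ preserves each of the two pieces when $t$ is even and swaps them when $t$ is odd, while for $\t = \R$ every $g^t$ lies in $G_H^0$ and preserves each piece.
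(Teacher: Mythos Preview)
Your proposal is correct and follows essentially the same approach as the paper: both arguments hinge on the structural facts $G_H = G_H^0 M$ and that $C = M/M_0$ is an elementary abelian $2$-group, from which the existence of $c_g$, the subgroup property of $G_H^g$, and the description of ${\cal M}(g^t,u)$ all follow by coset manipulations. The only differences are cosmetic: you make the Jordan decomposition explicit to locate $g$ in $G_H^0 M$ (whereas the paper simply invokes $g \in G_H = G_H^0 C$ from the preliminaries), and for the disjointness of the two components when $g \notin G_H^0$ you route through Corollary~\ref{corol:fix-flag} via $c_g \notin U_H$, while the paper argues directly that $c_g \notin K_H^0$ forces $K_H^0 ub \cap K_H^0 c_g ub = \emptyset$.
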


\begin{proof}
Since $g \in G_H = G_H^0M = G_H^0C$, it follow that $gG_H^0 = G_H^0c_g$, for some $c_g \in C$. Thus we have that
\[
G_H^g = G_H^0 \cup G_H^0c_g
\]
is a subgroup of $G_H$, since $C$ is a finite direct product of cyclic groups of order two. We also have that
\[
g{\rm fix}(H, u) = gK_H^0ub = gG_H^0ub = G_H^0c_gub = {\rm fix}(H, c_gu) = K_H^0c_gub
\]
Since $K_H^0 = K \cap G_H^0$, it follows that
\[
K_H^g = K_H^0 \cup K_H^0c_g
\]
and that
\[
K_H^g u b = {\cal M}(g^t,u)
\]
If $g \in G_H^0$, then $G_H^g = G_H^0$ and hence ${\cal M}(g^t,u) = {\rm fix}(H, u)$. If $g \notin G_H^0$, then $c_g \notin K_H^0$ and hence $K_H^0 \cap K_H^0c_g = \emptyset$, which shows that ${\rm fix}(H, u) \cap {\rm fix}(H, c_gu) = \emptyset$. Since $g^t \in G_H^g$, for all $t$, we have that ${\cal M}(g^t,u)$ is $g^t$-invariant.
\end{proof}

Consider the metric in $G/AN$ induced by the bi-invariant metric $d$ in $K$, such that
\[
 d(k'b, k''b) = d(k', k'')
\]
for all $k',k'' \in K$. It follows that
\[
 d(kk'mb, kk''mb) = d(k'b, k''b)
\]
for all $k \in K$ and all $m \in M$.

\begin{lemma}\label{novolemma2}
Let $G$ be a semisimple Lie group with Iwasawa decomposition $G = K A N$ with $K$ compact and $\kappa: G \to K$ so that $g \in \kappa(g) A N$. Let $u^t $ be a unipotent flow that commutes with the elliptic flow $e^t   \in K$.

\begin{enumerate}[$(i)$]
\item There is $s_n \to \infty$ so that $e^{s_n} \to 1$. 

\item For all $k \in K$, there is $a_n \in \N$ so that $k^{a_n} \to 1$.
\end{enumerate}
\end{lemma}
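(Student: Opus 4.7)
The plan is to use sequential compactness of $K$ for both items; in fact, (i) and (ii) both follow from the same ``subsequence and subtract'' trick in a compact topological group, and the hypotheses about $u^t$ commuting with $e^t$ (and about $\kappa$) are not actually needed for the two statements themselves and will presumably be used where the lemma is applied.

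First I would dispatch (ii). Given $k \in K$, the sequence $(k^n)_{n \geq 1}$ lies in the compact metric space $K$, so by sequential compactness there is a subsequence $k^{n_j} \to L$ for some $L \in K$. Passing to a further subsequence we may assume $n_{j+1} - n_j \geq 1$ (and in fact $n_{j+1} - n_j \to \infty$, though only positivity is required). Setting $a_j := n_{j+1} - n_j \in \N$ and using that multiplication and inversion are continuous in $K$,
\[
k^{a_j} = k^{n_{j+1}} \cdot (k^{n_j})^{-1} \longrightarrow L\, L^{-1} = 1,
\]
which proves (ii).

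Next I would prove (i). In the discrete case $\mathbb{T} = \Z$, item (ii) applied to $k = e^1 \in K$ produces $a_n \in \N$ with $a_n \to \infty$ and $e^{a_n} \to 1$, so $s_n := a_n$ works. In the continuous case $\mathbb{T} = \R$, the same scheme works directly for the flow: choose an auxiliary sequence $T_n \to \infty$ growing fast enough that $T_{n+1} \geq 2 T_n$ (for instance $T_n = 2^n$), use compactness of $K$ to extract a convergent subsequence $e^{T_{n_j}} \to L$, and set $s_j := T_{n_{j+1}} - T_{n_j}$. Fast growth yields $s_j \geq T_{n_{j+1}}/2 \to \infty$, while the flow property gives
\[
e^{s_j} = e^{T_{n_{j+1}}} \cdot (e^{T_{n_j}})^{-1} \longrightarrow L\, L^{-1} = 1.
\]

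There is no real obstacle. The only mild care needed is to ensure $a_j$ (resp.\ $s_j$) is strictly positive and tends to infinity, which is handled by the choice of an initially fast-growing index sequence before extracting the convergent subsequence. Everything else is a one-line consequence of sequential compactness plus the continuity of the group operations on $K$.
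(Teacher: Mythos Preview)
Your proof is correct and follows essentially the same ``convergent subsequence, then subtract indices'' argument as the paper; the only cosmetic differences are that the paper proves (i) before (ii), phrases the convergence via the bi-invariant metric and the triangle inequality rather than continuity of the group operations, and passes to a further subsequence (instead of starting from a fast-growing sequence) to force $s_n \to \infty$. Your observation that the hypotheses on $u^t$ and $\kappa$ play no role in the lemma itself is also accurate.
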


\begin{proof}
For item (i), since $K$ is compact there is $t_n \to \infty$ and $k \in K$ so that $e^{t_n} \to k $. Since $t_n \to \infty$ it is possible to assume taking a subsequence of $t_n$ that $s_n = t_{n+1} - t_{n} \to \infty$. By the isometry of left multiplication then 
\[
d(e^{t_{n+1}} e^{-t_{n}},1) = d(e^{t_{n+1}}, e^{t_n}) \leq d(e^{t_{n+1}}, k) + d(k, e^{t_n}) \to 0 
\]
when $n \to \infty$ and then $e^{s_n} \to 1 $ when $n \to \infty$. For item (ii), in a similar form to the previous item, taking the sequence $k^n \in K$ where $n \in \N$, there is a sequence $a_n \in \N$ so that $k^{a_n} \to 1$.
\end{proof}

\begin{lemma}\label{novolemma3}
Let $F$ be a compact subset of $K \simeq G/AN$ invariant by $u^t$. For all $\epsilon >0$, $kb \in F$ and $T>0$, there exist $s > T$, $k_0 \in F$ and $m \in M$, so that, $e^s \in B(1, \epsilon)$, $ u^{-s}kb \in B(k_0b, \epsilon)$ and $ u^s kb \in B(k_0mb, \epsilon)$.
\end{lemma}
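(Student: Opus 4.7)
The plan is to combine the elliptic near-recurrence in Lemma \ref{novolemma2}(i) with the known convergence of unipotent orbits on the maximal flag manifold, using the $M$-principal bundle $\pi : K \to \F$ to transfer the flag convergence back up to $K$. In broad strokes: first produce a large time $s$ at which $e^s$ is near the identity; then use compactness of $F$ to extract simultaneous limits of the forward and backward $u^t$-orbits; finally use that unipotent orbits on $\F$ converge to a single fixed point in both time directions to force the two limits to lie in a common $M$-fiber.

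Concretely, I would first apply Lemma \ref{novolemma2}(i) to obtain a sequence $s_n \to \infty$ with $e^{s_n} \to 1$; passing to a tail I may assume $s_n > T$ for all $n$. Since $F$ is compact and $u^t$-invariant, the sequences $u^{s_n}kb$ and $u^{-s_n}kb$ remain in $F$, so after passing to a subsequence twice I may assume $u^{s_n}kb \to y_1 \in F$ and $u^{-s_n}kb \to y_2 \in F$.

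Next I would invoke the fact recalled in Section \ref{subsec-translations}: for every $xb_0 \in \F$, the unipotent orbit $u^t xb_0$ converges, as $t \to \pm\infty$, to a common fixed point of $u^t$ in $\F$. Applying $\pi$ to the two convergent subsequences gives $\pi(y_1) = \pi(y_2)$, so $y_1$ and $y_2$ belong to the same $M$-fiber. Writing $y_2 = k_0 b$ for the unique $k_0 \in K$ representing $y_2$, the structure action $(kb,m) \mapsto kmb$ of $M$ then produces some $m \in M$ with $y_1 = k_0 m b$; moreover $k_0 \in F$ under the identification $K \simeq G/AN$, since $y_2 \in F$. Choosing $n$ large enough that $e^{s_n}$, $u^{-s_n}kb$, and $u^{s_n}kb$ lie within $\epsilon$ of $1$, $k_0 b$, and $k_0 m b$ respectively, and setting $s = s_n$, finishes the argument.

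The step I expect to be most delicate is the joint agreement of the $t \to \pm\infty$ limits of unipotent orbits in $\F$ (rather than mere existence of each limit separately), since this is precisely what links the two subsequential limits $y_1$ and $y_2$ through an element of $M$. This is cited from the preliminaries and reflects a general feature of unipotent flows on flag manifolds: a unipotent one-parameter subgroup has a distinguished fixed point on each $G$-orbit in $\F$ attracting the orbit in both time directions. The remainder of the proof is standard compactness together with a use of the $M$-bundle structure of $K \to \F$.
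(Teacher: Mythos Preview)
Your proposal is correct and follows essentially the same route as the paper's proof: both combine Lemma \ref{novolemma2}(i) for the elliptic recurrence with the fact (quoted in Section \ref{subsec-translations}) that $u^t k b_0$ converges in $\F$ to a single point as $t\to\pm\infty$, and both use the $M$-fiber structure of $K\to\F$ to relate the two subsequential limits by an element of $M$. The only cosmetic difference is ordering: the paper first names the target fiber $\pi^{-1}(k'_0 b_0)$ via the flag limit and then extracts convergent subsequences there, whereas you first extract limits $y_1,y_2\in F$ by compactness of $F$ and then observe $\pi(y_1)=\pi(y_2)$ a posteriori.
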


\begin{proof}
From item (i) of Lemma \ref{novolemma2} there is $n_0 \in \N$ so that $e^{s_n} \in B (1, \epsilon)$ for $n > n_0$. For all $kb \in F$ there is $k'_0 \in K$ so that $u^t k b_0 \to k'_0 b_0$ when $ t \to \pm \infty$. Since $K$ and $\pi^{-1}(k'_0b_0)$ are compact, we can take a subsequence of $s_n$ and assume that $u^{-s_n} kb \to k'_0 m'b \in \pi^{-1}(k'_0b_0)$, for some $m' \in M$. We can take a subsubsequence of $s_n$  and assume that  $u^{s_n} kb \to k'_0 m''b \in \pi^{-1}(k'_0b_0)$, for some $m'' \in M$.
Then there exists an integer $N > T$ such that $e^{s_N} \in B (1, \epsilon)$, $u^{-s_N} kb \in B (k'_0 m'b, \epsilon)$ and $ u^{s_N} kb \in B (k'_0 m''b, \epsilon)$, now take $s = s_N$, $k_0 = k'_0 m'$ and $m = (m')^{-1} m''$ then $k'_0 m'' = k'_0 m' (m')^{-1} m'' = k_0 m$.
\end{proof}

\begin{proposition}\label{recorrente por cadeia completo}
Let $F$ be a compact subset of $K \simeq G/AN$ invariant to the left by $e^t$ and $u^t$ and invariant to the right by $M$. Then $\mathcal{R}_C (e^t u^t|_F) = F$.
\end{proposition}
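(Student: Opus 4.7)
Plan. Given $x \in F$, $\epsilon > 0$, and $T > 0$, the plan is to lift a $(\delta, T)$-chain from $\pi(F) \subset \F$ to an $(\epsilon, T)$-chain in $F$ from $x$ to itself, correcting the resulting monodromy via Lemma \ref{novolemma2}(ii). Since the flow $\phi^t = e^t u^t$ has trivial hyperbolic component, Proposition 2.2 gives that the minimal Morse decomposition of $\phi^t$ on $\F$ consists of a single component, namely all of $\F$; hence $\phi^t$ is chain transitive on $\F$, and an analogous analysis restricted to the $\phi^t$-invariant compact subset $\pi(F) \subset \F$ shows that every point of $\pi(F)$ is chain recurrent under $\phi^t|_{\pi(F)}$. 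Thus, for any $\delta > 0$, there is a $(\delta, T)$-chain $\bar x_0 = \pi(x), \bar x_1, \ldots, \bar x_n = \pi(x)$ lying inside $\pi(F)$.

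Next, I would lift this chain to $F$ using the principal $M$-bundle structure of $\pi : K \to \F$. Setting $x_0 = x$, I inductively select $x_{i+1} \in \pi^{-1}(\bar x_{i+1}) \cap F$---which is nonempty by the right $M$-invariance of $F$---minimizing $d_K(\phi^{t_i}(x_i), x_{i+1})$. By local triviality of the bundle and compactness of $\F$, there is a uniform constant $c > 0$ for which $d_K(\phi^{t_i}(x_i), x_{i+1}) \leq c \cdot d_\F(\phi^{t_i}(\bar x_i), \bar x_{i+1}) < c\delta$. After $n$ steps, $x_n$ lies in $\pi^{-1}(\pi(x)) \cap F$, so $x_n = xm$ for some $m \in M$---the monodromy of the lifted loop, which can be identified (via Lemma \ref{novolemma3} applied at an appropriate stage of the construction) with the element $m$ produced there.

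To close the chain at $x$ itself, I concatenate $a$ copies of the above lifted chain; the cumulative monodromy becomes $m^a$. By Lemma \ref{novolemma2}(ii) applied to $m \in K$, there is $a \in \N$ with $d_K(m^a, 1) < \epsilon/2$, whence $d_K(xm^a, x) < \epsilon/2$ by the bi-invariance of the metric on $K$. A final closing jump of size $d_K(xm^a, x)$ then completes the $(\epsilon, T)$-chain from $x$ to itself, provided $\delta$ is chosen so that $c\delta < \epsilon$ and all intermediate jumps remain below $\epsilon$. The main obstacle is establishing chain recurrence of every point of $\pi(F)$ under the restricted flow $\phi^t|_{\pi(F)}$, rather than merely under $\phi^t|_\F$: this follows from an analysis parallel to that in Proposition 2.2, adapted to the invariant compact subset $\pi(F) \subset \F$, crucially using that the hyperbolic component of $\phi^t$ is trivial so that the base dynamics on any such invariant subset forms a single chain recurrent component.
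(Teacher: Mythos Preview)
Your approach differs substantially from the paper's. The paper never projects to $\F$ and lifts chains; it works entirely inside $K$. Using Lemma~\ref{novolemma3} it produces, for a given $kb \in F$, a single time $s>T$ with $e^s$ close to $1$ and with $u^{-s}kb$ close to some $k_0 b$ and $u^{s}kb$ close to $k_0 m b$ for some $m\in M$. Right-translating by powers of $m$ (using right $M$-invariance of $F$ and the bi-invariant metric) yields an explicit $(\epsilon/2,T)$-chain
\[
kb,\; e^{-s}u^{-s}kmb,\; kmb,\; e^{-s}u^{-s}km^2b,\; km^2b,\;\ldots,\; km^p b,
\]
and one closes up by choosing $p$ with $m^p$ near $1$. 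No chain recurrence result on $\F$ is invoked.

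Your proposal has a genuine gap precisely where you call it the ``main obstacle'': chain recurrence of $\phi^t|_{\pi(F)}$. This does not follow from Proposition~2.2. That proposition concerns the Morse decomposition of a translation acting on the \emph{entire} flag manifold $\F$; it says nothing about restrictions to arbitrary compact invariant subsets, and chain transitivity of the ambient space does not pass to such subsets, since the $(\epsilon,T)$-chains in $\F$ may leave $\pi(F)$. Worse, your assertion that the restricted dynamics ``forms a single chain recurrent component'' is false in the very application that motivates the proposition: there $F=\mathrm{fix}(h^t)$, so $\pi(F)=\bigcup_{w}\mathrm{fix}_\F(H,w)$ is a disjoint union of several components. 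What you actually need---that every point of $\pi(F)$ is chain recurrent for the restriction---is true, but proving it amounts to running the paper's Lemma~\ref{novolemma3} argument and projecting to $\F$; once that is done the direct chain in $K$ is already in hand and the lifting detour is superfluous. Finally, your invocation of Lemma~\ref{novolemma3} to ``identify'' the monodromy of your lifted loop with the element $m$ produced there is unmotivated: the monodromy depends on the particular base chain and its lift, and bears no a~priori relation to the $m$ in that lemma.
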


\begin{proof}
Let $kb \in F$ and $\epsilon > 0$, to prove the result we will show a $(\epsilon,T)$-chain from $k$ to $k$. First, let us form a $(\epsilon/2,T)$-chain from $k$ to $k m^p$.
Take $\epsilon$ in Lemma \ref{novolemma3} to be $\epsilon/8$. Then there are $s >T$, $k_0b \in F$ and $m \in M$, so that, $e^s \in B (1, \epsilon/8)$,
\[
u^{-s} kb \in B (k_0b,\epsilon/8) \quad \mbox{ and } \quad u^{s} kb \in B (k_0 mb,\epsilon/8)
\]
From invariance, we have that
\[
 d(e^{-s}u^{-s}kb, e^{-s}k_0b) = d(u^{-s}kb , k_0b)
  \quad \mbox{ and } \quad
 d(e^{s}u^{s}kb, e^{s}k_0b) = d(u^{s}kb , k_0b)
\]
and that
\[
 d(e^{-s}k_0b, k_0b) = d(e^{-s}k_0, k_0) = d(e^{-s}, 1) = d(1, e^s)
\]
and
\[
 d(e^{s}k_0b, k_0b) = d(e^{s}k_0, k_0) = d(e^{s}, 1)
\]
From the triangle inequality,
\[
e^{-s} u^{-s} kb \in B (k_0b,\epsilon/4) \quad \mbox{ and } \quad e^{s} u^{s} kb \in B(k_0 mb,\epsilon/4)
\]
For $ i = 1, \ldots, p-1$, we have that
\[
e^{-s} u^{-s} k m^ib \in B (k_0 m^ib , \epsilon/4) \quad \mbox{ and } \quad e^{s} u^{s} k m^ib \in B (k_0 m^{i+1}b,\epsilon/4)
\]
Now we have that the sequence
\[
\{ kb, e^{-s} u^{-s} k mb, kmb, e^{-s} u^{-s} k m^2b, km^2b, \ldots, k m^pb \}
\]
is a $(\epsilon/2, T)$-chain from $kb$ to $k m^pb$. In fact, note that
\begin{eqnarray*}
d(e^{s}u^{s}kb, e^{-s}u^{-s}kmb) &\leq& d(e^{s}u^{s}kb, k_0mb) + d(k_0mb , e^{-s} u^{-s}kmb) \\
&< & \frac{\epsilon}{4} + \frac{\epsilon}{4} = \frac{\epsilon}{2}
\end{eqnarray*}
Then it follows that
\[
d(e^{s}u^{s}km^ib, e^{-s}u^{-s}km^{i+1}b) < \frac{\epsilon}{2}
\]
This proves that the previous sequence is indeed a $(\epsilon/2, T)$-chain from $kb$ to $k m^pb$. Now take $p$ to be an positive integer so that $m^p \in B (1, \epsilon/2)$. Thus $m^pb \in B (b, \epsilon/2)$ so that $k m^pb \in B(kb, \epsilon/2)$. Replacing the last term $k m^pb$ in the previous chain by $kb$ we obtain an $(\epsilon,T)$-chain from $kb$ to $kb$.
\end{proof}

\begin{theorem}\label{propmorseflag}
 Then
\begin{enumerate}[$(i)$]
\item The minimal Morse components of $g^t$ are given by ${\cal M}(g^t,u)$ and
\[
{\cal R}_C (g^t) = {\rm fix} (h^t) = \uniaodisj \left\{ {\cal M}(g^t,u): \, u \in U_H^g \backslash U \right\}
\]
where
\[
U_H^g := U_H \cup c_gU_H
\]
The attractors are given by ${\cal M}(g^t,c)$ and the repellers are given by ${\cal M}(g^t,c u^-)$, for $c \in C_H^g \backslash C$, where
where
\[
C_H^g := C_H \cup c_gC_H
\]

\item The stable manifolds of $g^t$ are given by ${\rm st}(g^t,u) := N^-_H {\cal M}(g^t,u)$ and the unstable manifolds of $g^t$ are ${\rm un}(g^t, u) := N^+_H {\cal M}(g^t,u)$. The union of the stable manifolds of the attractors is open and dense.
\end{enumerate}
\end{theorem}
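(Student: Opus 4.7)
The plan is to combine the hyperbolic description of Theorem \ref{theorem:elemento-nao-reg} with the Jordan decomposition $g^t=e^t h^t u^t$, the chain-recurrence result of Proposition \ref{recorrente por cadeia completo}, and the Morse decomposition on the maximal flag manifold $\F$ recalled in Section \ref{subsec-translations}, lifted along the $M$-principal bundle $\pi\colon K\to\F$.

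First I would prove the equality $\mathcal{R}_C(g^t)={\rm fix}(h^t)$. The inclusion $\mathcal{R}_C(g^t)\subseteq{\rm fix}(h^t)$ uses $G$-equivariance of $\pi$: chain recurrence descends, and the cited result on $\F$ gives $\mathcal{R}_C(g^t|_\F)={\rm fix}_\F(h^t)$, whose $\pi$-preimage in $K$ is exactly ${\rm fix}(h^t)=K_H^0 M^* b$. For the reverse, ${\rm fix}(h^t)$ is compact, invariant under the commuting factors $e^t,u^t$, and right $M$-invariant (since $M^*M=M^*$); on it $h^t$ acts trivially, so $g^t=e^t u^t$ there, and Proposition \ref{recorrente por cadeia completo} yields $\mathcal{R}_C(e^t u^t|_{{\rm fix}(h^t)})={\rm fix}(h^t)$, hence ${\rm fix}(h^t)\subseteq\mathcal{R}_C(g^t)$ because $(\epsilon,T)$-chains inside an invariant subset remain chains in $K$. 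I would then observe that each ${\cal M}(g^t,u)=K_H^g u b$ is compact and $g^t$-invariant by the preceding proposition, and that $K_H^g ub=K_H^g vb\iff v\in U_H^g u$ (argued as in Corollary \ref{corol:fix-flag}); combined with Theorem \ref{theorem:elemento-nao-reg}, this yields the disjoint union $\bigsqcup_{u\in U_H^g\backslash U}{\cal M}(g^t,u)={\rm fix}(h^t)$, so all $\omega$- and $\omega^*$-limits of $g^t$ lie in some ${\cal M}(g^t,u)$.

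For item (ii), the forward inclusion $N^-_H{\cal M}(g^t,u)\subseteq{\rm st}(g^t,u)$ follows by writing $x=ny$ with $n\in N^-_H$ and $y\in{\cal M}(g^t,u)$: since $G_H$ normalizes $N^-_H$, the conjugate $g^t n g^{-t}$ stays in $N^-_H$, and combining Lemma \ref{lemmadecaimentoexp} (applied to the hyperbolic factor) with the polynomial growth of $u^t n u^{-t}$ forces $g^t n g^{-t}\to 1$, while $g^t y$ remains in the compact invariant set ${\cal M}(g^t,u)$. The reverse inclusion and the partition $K=\bigsqcup_u{\rm st}(g^t,u)$ follow by projecting to $\F$ (via Corollary \ref{teovarestavel} on the hyperbolic case), lifting through $\pi$, and refining the fiber using the $C_H^g\backslash C$ identifications. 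The third Morse-decomposition axiom and minimality then fall out: any $x$ with $\omega(x),\omega^*(x)\subset{\cal M}(g^t,u)$ projects to ${\rm fix}_\F(H,\pi(u))={\rm st}_\F(H,\pi(u))\cap{\rm un}_\F(H,\pi(u))$, and the fiber-level refinement forces $x\in{\cal M}(g^t,u)$; chain transitivity of each ${\cal M}(g^t,u)$ is then inherited in the connected case from the Conley criterion of Section 2.1, and in the discrete case $g\notin G_H^0$ from $g$ swapping the two connected components of ${\cal M}(g^t,u)$. Attractors and repellers are identified by lifting the attractor ${\rm fix}_\F(H,1)$ (whose stable set ${\rm st}_\F(H,1)$ is open and dense in $\F$) along $\pi$, producing the open-dense union $\bigsqcup_{c\in C_H^g\backslash C}{\rm st}(g^t,c)$; the repellers ${\cal M}(g^t,cu^-)$ come from the same argument applied to $(g^t)^{-1}$, using $\pi(u^-)=w^-$.

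The main obstacle I anticipate is the fiber-level refinement that lifts the Morse decomposition from the $W$-indexing on $\F$ to the finer $U_H^g\backslash U$-indexing on $K$: the chains produced by Proposition \ref{recorrente por cadeia completo} use right $M$-translations $kb\mapsto kmb$ with $m$ the monodromy from the unipotent limits in Lemma \ref{novolemma3}, and a priori such an $m$ could carry the chain into a neighboring ${\cal M}(g^t,c'u)$ inside the same fiber $K_H u b$. The crucial technical point will be to show that this monodromy $m$ lies in $u(K_H^g\cap M)u^{-1}$, so that the chain stays inside one ${\cal M}(g^t,u)$; this is exactly where the distinction between $g\in G_H^0$ and $g\notin G_H^0$, and hence the role of the element $c_g\in C$, enters through the definition $U_H^g=U_H\cup c_g U_H$.
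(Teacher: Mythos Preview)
Your outline matches the paper's proof closely. The only reordering: the paper first shows $\{{\cal M}(g^t,u)\}$ is a Morse decomposition for $g^t$ (computing the stable sets directly and verifying axiom~(iii) via the projection $\pi\colon K\to\F$, just as you propose) and then reads off $\mathcal{R}_C(g^t)\subseteq{\rm fix}(h^t)$ from the general containment of $\mathcal{R}_C$ in any Morse decomposition; your alternative route for this inclusion---descend chain recurrence along $\pi$ and pull back $\pi^{-1}({\rm fix}_\F(h^t))={\rm fix}(h^t)$---is equally valid.

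The obstacle you anticipate is a phantom, and the ``crucial technical point'' you describe is neither needed nor likely provable. Proposition~\ref{recorrente por cadeia completo} is applied only with $F={\rm fix}(h^t)$ as a whole (which is right-$M$-invariant because $M\subset M^*$), to show that every point of ${\rm fix}(h^t)$ is chain recurrent; it is never applied to an individual ${\cal M}(g^t,u)$, which indeed fails to be right-$M$-invariant, and there is no need for the chains it produces to stay inside one ${\cal M}(g^t,u)$. Minimality comes purely from general Conley theory, exactly as you already sketch just before your obstacle paragraph: once $\{{\cal M}(g^t,u)\}$ is a Morse decomposition whose union is $\mathcal{R}_C(g^t)$, the finitely many chain classes are closed and hence clopen in $\mathcal{R}_C(g^t)$, so each connected component ${\rm fix}(H,u)$ of $\mathcal{R}_C(g^t)={\rm fix}(h^t)$ lies in a single chain class; that class is invariant and contained in a single Morse component, and since $g$ carries ${\rm fix}(H,u)$ onto ${\rm fix}(H,c_gu)$ the class must equal ${\cal M}(g^t,u)$. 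This is exactly the one-sentence argument the paper gives, and no control of the unipotent monodromy $m$ enters.
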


\begin{proof}
First we will prove that ${\rm st}(g^t,u)$ is a stable set from the $g^t$-invariant ${\cal M}$. By the Bruhat decomposition of $K$ (Theorem \ref{theorem:elemento-nao-reg} (ii)), it is enough to show that ${\rm st}(g^t,u)$ is in the stable manifold of ${\cal M}(g^t,u)$, that is, it is enough to prove that for any $y \in S$ then $\omega(y) \subset {\cal M}(g^t,u)$. Let $y \in {\rm st}(g^t,u)$, so $y = \exp(Y) l u b$, where $Y \in \n^-_H$ and $l \in G_H^g$. Then
\[
g^t y = g^t \exp(Y) g^{-t} g^t l u b = \exp(g^t Y) g^t l u b
\]
where $g^t l u b \in {\cal M}(g^t,u)$, since $g^t l \in G_H^g$. First, we prove that $g^t Y \to 0$. In fact, from the Jordan decomposition, $r (g)$ is the greatest eigenvalue from its hyperbolic component, is given by the restriction of $h$ to $\n^-_H$. These eigenvalues are given by $e^{-\alpha (H)}$, where $\alpha \in \Pi^+$ and $\alpha(H) > 0$, hence $r (g) < 1$. Taking then $x \in \omega(y)$ then let $t_j \to \infty$ such that $g^{t_j} y \to x$. Since $\exp(g^{t_j} Y) \to 1$, then
\[
g^{t_j} y = \exp(g^{t_j} Y) g^{t_j} l u b \to x
\]
and $x$ is in ${\cal M}(g^t,u)$, since $g^{t_j} l u b$ is in ${\cal M}(g^t,u)$, which is compact. Hence $\omega(y) \subset {\cal M}(g^t,u)$. Similarly, we can prove that ${\rm un}(g^t, u)$ is the unstable set of ${\cal M}(g^t,u)$.

Now we have that $\{{\cal M}(g^t,u): u \in U \}$ is a Morse decomposition for $h^t$ in $K$, since we have that ${\cal M}(g^t,u) = {\rm fix}(H, u) \cup {\rm fix}(H, c_gu)$, that ${\rm st}(g^t,u) = {\rm st}(H, u) \cup {\rm st}(H, c_gu)$, that ${\rm un}(g^t,u) = {\rm un}(H, u) \cup {\rm un}(H, c_gu)$, and that there is no $x$ such that $\omega(x) \cap {\rm fix}(H, cu) \neq \emptyset$ and $\omega^*(x) \cap {\rm fix}(H, c'u) \neq \emptyset$, for any $c \neq c' \in C$. In fact, we have that
\[
 \pi^{-1}(\pi({\rm fix}(H, u))) = \bigcup_{c \in C} {\rm fix}(H, cu)
\]
so that, if $x \notin {\rm fix}(H, cu)$, for any $c \in C$, then $\pi(x) \notin \pi({\rm fix}(H, u)) = {\rm fix}_\F(H, \pi(u))$, so that $\omega(\pi(x)) \cap {\rm fix}_\F(H, \pi(u)) = \emptyset$ or $\omega^*(\pi(x)) \cap {\rm fix}_\F(H, \pi(u)) = \emptyset$, since $\{{\rm fix}_\F(H, \pi(u)): u \in U \}$ is a Morse decomposition for $h^t$ in $\mathbb{F}$. This implies that $\{{\cal M}(g^t,u): u \in U \}$ is a Morse decomposition for $g^t$ in $K$, since the stable and unstable sets of each ${\cal M}(g^t,u)$ with respect to $g^t$ or to $h^t$ coincide. Hence
\[
{\cal R}_C (g^t) \subset {\rm fix}(h^t) = \bigcup_{u \in U} {\cal M}(g^t,u)
\]
By Proposition \ref{recorrente por cadeia completo}, we have that $ {\rm fix}(h^t) = \mathcal{R}_C (e^tu^t|_{{\rm fix}(h^t)}) = \mathcal{R}_C (g^t|_{{\rm fix}(h^t)}) \subset \mathcal{R}_C (g^t)$ and thus $\mathcal{R}_C (g^t) = {\rm fix} \left( h^t \right)$. In order to show that ${\cal M}(g^t,u)$ are the minimal Morse components of $g^t$ in $K$, we need just to remember that each connected component of $\mathcal{R}_C (g^t)$ is transitive and, when ${\cal M}(g^t,u)$ is disconnected, $g = g^1$ switches its two connected components ${\rm fix}(H, u)$ and $g{\rm fix}(H, u)$. Since ${\cal M}(g^t,u) = {\rm fix}(H, u) \cup {\rm fix}(H, c_gu)$, the correct quotient to get a disjoint union is $U_H^g \backslash U$ instead of $U_H \backslash U$. The other statements of items (i) and (ii) follow from Theorem \ref{theorem:elemento-nao-reg}, since the stable and unstable sets of each ${\cal M}(g^t,u)$ with respect to $g^t$ or to $h^t$ coincide.
\end{proof}

Next we obtain a characterization of the recurrent set.

\begin{theorem}\label{teorecflag}
Let $g^t$ be a translation flow on $K \simeq G/AN$ and $g^t = e^t h^t u^t$ be its Jordan decomposition. The recurrent set of translations $g^t$ induced in $K$ is given by
\[
\mathcal{R}(g^t) = {\rm fix} (h^t) \cap {\rm fix} (u^t)
\]
\end{theorem}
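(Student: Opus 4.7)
The plan is to prove the two inclusions of $\mathcal{R}(g^t) = {\rm fix}(h^t) \cap {\rm fix}(u^t)$ separately. For the inclusion $\supseteq$, take $x \in {\rm fix}(h^t) \cap {\rm fix}(u^t)$. Since $e^t$, $h^t$, $u^t$ commute and both $h^t$ and $u^t$ fix $x$, one has $g^t x = e^t x$, so the $g^t$-orbit of $x$ coincides with the $e^t$-orbit. But $e^t$ lies in the compact group $K$ and acts on $K \simeq G/AN$ by left translations, which are isometries of any bi-invariant metric. The closure $\overline{\{e^t : t \in \mathbb{T}\}}$ is a compact abelian subgroup of $K$, so the action is equicontinuous; every orbit is minimal inside its closure, and in particular $x \in \omega(x)$, giving $x \in \mathcal{R}(g^t)$.

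For the inclusion $\subseteq$, let $x \in \mathcal{R}(g^t)$. Since $\mathcal{R}(g^t) \subset \mathcal{R}_C(g^t) = {\rm fix}(h^t)$ by Theorem \ref{propmorseflag}, we have $h^t x = x$, and the remaining task is to show $u^t x = x$. I would project to the maximal flag manifold via $\pi : K \to \F$: by continuity and equivariance of $\pi$, the point $\pi(x)$ is recurrent under $g^t$ on $\F$, and the analogous theorem on flag manifolds from \cite{ferraiol, Patrao1} gives $\mathcal{R}_\F(g^t) = {\rm fix}_\F(h^t) \cap {\rm fix}_\F(u^t)$, hence $\pi(x) \in {\rm fix}_\F(u^t)$. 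Writing $x = kb$, this forces $k^{-1} u^t k \in MAN$ for every $t$, with Iwasawa decomposition $k^{-1} u^t k = m(t) a(t) n(t)$, where $m(t) \in M$, $a(t) \in A$, $n(t) \in N$.

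The crucial algebraic step is to deduce $m(t) = 1$ and $a(t) = 1$, which then yields $u^t x = k n(t) b = k b = x$. Since $u^t$ is Jordan-unipotent and conjugation preserves the Jordan decomposition, $k^{-1} u^t k$ is Jordan-unipotent too, so $\Ad(m(t) a(t) n(t))$ is a unipotent operator on $\g$. In the $\a$-weight decomposition $\g = \m \oplus \a \oplus \bigoplus_\alpha \g_\alpha$, the operator $\Ad(m(t) a(t) n(t))$ is block upper triangular with diagonal blocks $\Ad(m(t))|_{\m}$, $\id_{\a}$, and $e^{\alpha(\log a(t))} \Ad(m(t))|_{\g_\alpha}$. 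Unipotency forces all diagonal eigenvalues to equal $1$; combined with the fact that $\Ad(m(t))$ is semisimple with eigenvalues on the unit circle, this yields $\alpha(\log a(t)) = 0$ for every root $\alpha$ and $\Ad(m(t))$ trivial on each weight space. Since the roots span $\a^*$, one obtains $a(t) = 1$, and $\Ad(m(t)) = \id$ gives $m(t) \in Z(G)$. Then $k^{-1} u^t k = m(t) n(t)$ is a commuting product of a central elliptic element and a unipotent one, which by uniqueness is the Jordan decomposition of $k^{-1} u^t k$; Jordan-unipotency then forces $m(t) = 1$.

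The main obstacle is precisely the algebraic fact that a Jordan-unipotent element sitting in $MAN$ must in fact lie in $N$. Once this is in hand the proof is a short chain of reductions: Theorem \ref{propmorseflag} pins the ordinary recurrent set inside ${\rm fix}(h^t)$, the flag analog transfers $u^t$-fixation to the flag, and uniqueness of the Jordan decomposition combined with the Iwasawa decomposition rules out any nontrivial $M$- or $A$-component.
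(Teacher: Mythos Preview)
Your proof is correct and reaches the same conclusion as the paper, but the two arguments diverge at the crucial step. Both proofs use Theorem~\ref{propmorseflag} to place $\mathcal{R}(g^t)$ inside ${\rm fix}(h^t)$ and then reduce, via the projection $\pi:K\to\F$, to showing that $k^{-1}u^tk\in MAN$ forces the $M$-component to be trivial. The paper handles this analytically: it passes to the adjoint group, observes that the entries of $\widehat{u^t}$ are polynomials in $t$, and argues that $\widehat{m_t}=\widehat{\kappa}(\widehat{k}^{-1}\widehat{u^t}\widehat{k})$ has a well-defined limit $\widehat{m}$ as $t\to\infty$; the semigroup law then gives $\widehat{m_t}\widehat{m}=\widehat{m}$, so $\widehat{m_t}=1$, and a continuity argument (using $m_0=1$) finishes. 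Your route is purely algebraic: you read off the block-triangular form of $\Ad(m(t)a(t)n(t))$ in the root-space ordering, use that unipotency forces all diagonal eigenvalues to equal $1$, and combine this with the fact that $\Ad(m(t))$ is orthogonal to get $a(t)=1$ and $\Ad(m(t))=\id$; uniqueness of the multiplicative Jordan decomposition then kills the central part.

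Your argument is more conceptual and works uniformly for $\mathbb{T}=\Z$ and $\mathbb{T}=\R$ without the separate continuity step the paper needs at the end. On the other hand, you import the flag-manifold identity $\mathcal{R}_\F(g^t)={\rm fix}_\F(h^t)\cap{\rm fix}_\F(u^t)$ from \cite{ferraiol}, whereas the paper only uses the more elementary fact (stated in Section~\ref{subsec-translations}) that every $u^t$-orbit on $\F$ converges, and then establishes $\mathcal{R}(u^t)={\rm fix}(u^t)$ on $K$ directly. For the inclusion $\supseteq$ your equicontinuity argument is also a bit cleaner than the paper's: once $h^t$ and $u^t$ fix $x$, the orbit is simply $e^tx$, and recurrence follows from Lemma~\ref{novolemma2}(i).
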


\begin{proof}
First we prove that $ {\rm fix} ( h^t ) \cap \mathcal{R} ( u^t ) \subset \mathcal{R}(g^t )$. If $kb \in \mathcal{R} ( u^t )$ then $kb_0$ is a fixed point of the maximal flag $ \F = K/M$, so $u^t kb = k m_tb$ with $m_t \in M$ for all $t \in \R$. Now we have that $m_{t+s} = m_t m_s$. Indeed, we have that
\[
 k m_{t+s}b = u^{t+s} kb = u^t u^s kb
  = u^t k m_sb = u^t kb m_s = k m_tb m_s
  = k m_tm_sb
\]
showing that $ m_{t+s} = m_tm_s $ for all $t,s \in \R$. Since $m_0 = 1$ then $m_{-t} = m_t^{-1}$. And since $M$ is compact, there exist $t_i \to \infty$ and $\widetilde{m} \in M$ so that  $u^{t_i} kb \to k \widetilde{m}b$. Taking a subsequence of $t_i$ we can assume that $e^{t_i} \to \widetilde{e}$ and $s_i = t_{i+1} - t_i \to \infty$. Then
\[
m_{s_i} = m_{t_{i+1}} m_{- t_i} \to \widetilde{m} \widetilde{m}^{-1} =1
\]
and $u^{s_i} kb = k m_{s_i}b \to kb$. Since $e^{s_i} \to \widetilde{e}{\widetilde{e}}^{-1} = 1$ then
\[
g^{s_i} kb = e^{s_i} u^{s_i} kb \to kb
\]
showing that $kb \in \mathcal{R} (g^t)$.
Now let us prove that $\mathcal{R} (g^t) \subset {\rm fix} (h^t) \cap \mathcal{R} (u^t)$. From item (i) of Theorem \ref{propmorseflag}, $ kb \in \mathcal{R} (g^t)\subset \mathcal{R}_C (g^t) = {\rm fix}\left( h^t \right)$, so we just need to prove that $kb \in \mathcal{R} (u^t)$. Let $t_i \to \infty$ such that $g^{t_i} kb \to kb $ and taking a subsequence we can assume that $e^{t_i} \to \widetilde{e}$ and that $s_i = t_{i+1} - t_i \to \infty$. Thus
\[
 u^{s_i} kb = e^{-s_i}g^{s_i} kb \to kb
\]
since $e^{-s_i} \to 1$.

Since ${\rm fix} (u^t) \subset \mathcal{R} (u^t)$, we now show that $\mathcal{R} (u^t) \subset \pi^{-1}({\rm fix}_\F (u^t)) \subset {\rm fix} (u^t)$. For the first inclusion, if $u^{t_n} kb \to kb$, when $t_n \to \infty$, then $u^{t_n}kb_0 \to kb_0 \in \F$ so that $kb_0 \in {\rm fix}_\F (u^t)$. For the second inclusion, if $kb$ is such that $kb_0$ is a fixed point in $\F$, then $u^t kb = k m_tb$ with $m_t \in M$. Equivalently, we have that $\kappa(k^{-1}u^t k) = m_t$. Now, if we aplly to this equation the homomorphism $g \mapsto \widehat{g}$ given by the adjoint map from $G$ to the adjoint group $\widehat{G}$, we get $\widehat{\kappa}\left(\widehat{k}^{-1}\widehat{u^t} \widehat{k}\right) = \widehat{m_t}$, where the projection $\widehat{\kappa}:\widehat{G} \to \widehat{K}$ is given by the Gram-Schmidt process applied to the columns of the matrix relative to a suitable basis (Lemma 6.45 of \cite{knapp}). We have that the entries of the matrix $\widehat{u^t}$ are polynomials in $t$, since $\widehat{u^t}$ is the exponential of a nilpotent matrix times $t$. Hence the square of the entries of the matrix $\widehat{\kappa}\left(\widehat{k}^{-1}\widehat{u^t} \widehat{k}\right)$ are rational fuctions in $t$, where the degree of the denominator is greater than or equal to the degree of the numerator. Thus $\widehat{\kappa}\left(\widehat{k}^{-1}\widehat{u^t} \widehat{k}\right) = \widehat{m_t}$ has a well defined limit when $t \to \infty$, which we denote by $\widehat{m}$. Since $\widehat{m_t}\widehat{m_s} = \widehat{m_{t+s}}$, taking limit when $s \to \infty$, we get that $\widehat{m_t}\widehat{m} = \widehat{m}$ and thus $\widehat{m_t} = 1$. Hence we have that $m_t$ is in the kernel of the adjoint map, which is discrete, since it is the center of $G$. Since $m_0 = 1$ and $t \in \R$, it follows that $m_t = 1$ and that $kb \in {\rm fix} (u^t)$, completing the proof.
\end{proof}

\subsection{Normal hyperbolicity}\label{seclineariz}

In this section, we follow the same steps used in \cite{Patrao1} in order to show that each ${\cal M} = {\cal M}(g^t,u) = G_H^g u b = K_H^g u b$ is normally hyperbolic. First we construct an appropriate Riemannian metric of $K$. For this, we define an adequate complement of the isotropy subalgebra $\g_x$ for $x$ in $K$ to use it as a model for the tangent bundle $T K_x$. Let us fix in $\g$ a Cartan inner product $\langle \cdot, \cdot \rangle$, that is $K$-invariant. Consider $\g_x^\perp$ the orthogonal complement of $\g_x$ in $\g$ with respect to this inner product. The map
\begin{equation}
\label{normalhypeq1}
\g_x^\perp \to T K_x \qquad X \mapsto X \cdot x
\end{equation}
is a linear isomorphism, and, for $k \in K$, we have that
\begin{equation}
\label{eqlinearizacao1}
k ( \g_x^\perp ) = \g_{k x}^\perp
\end{equation}
Now, since $\g_{k b} = k ( \a \oplus \n )$ and $(\a \oplus \n)^\perp = \m \oplus \n^- $, it follows that
\begin{equation}
\label{eqperpisotropia2}
\g_{k b}^\perp = k (\m \oplus \n^-)
\end{equation}
The following result is proved in the same way as Proposition 3.1 of \cite{Patrao1}

\begin{proposition}
\label{normanatural}
The following
\[
\langle X \cdot x ,\, Y \cdot x \rangle_x := \langle X, Y \rangle \qquad\text{for } X , Y \in \g_x^\perp
\]
defines a $K$-invariant Riemannian metric in $K$ such that the map defined in {\normalfont (\ref{normalhypeq1})} is an isometry. Then, for $Y \in \g$,
\[
| Y \cdot x |_x \leq |Y|
\]
with equality if, and only if, $Y \in \g_x^\perp$.
\end{proposition}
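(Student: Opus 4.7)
The plan is to verify this in four short steps: well-defined bilinear form, smoothness in $x$, $K$-invariance, and the norm inequality. Since the excerpt already gives us that $X \mapsto X \cdot x$ is a linear isomorphism from $\g_x^\perp$ onto $TK_x$, the formula
\[
\langle X \cdot x ,\, Y \cdot x \rangle_x := \langle X, Y \rangle
\]
determines $\langle \cdot, \cdot \rangle_x$ on all of $TK_x$ unambiguously, and it inherits bilinearity, symmetry and positive-definiteness from the Cartan inner product. By construction the isomorphism in \eqref{normalhypeq1} is an isometry onto its image.

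Smoothness in $x$ follows because $K$ acts transitively and freely on the frame of normal complements: writing $x = kb$, the subspace $\g_x^\perp = k(\m \oplus \n^-)$ depends smoothly on $k$ by \eqref{eqperpisotropia2}, so a local smooth frame for the bundle $x \mapsto \g_x^\perp$ is obtained by translating a fixed basis of $\m \oplus \n^-$ via a local smooth section of $K \to K$; pairing frame fields against $\langle \cdot, \cdot \rangle$ gives smooth components of the metric.

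For $K$-invariance, fix $k \in K$. The derivative of the left translation $L_k: K \to K$ sends $X \cdot x$ to $(kX)\cdot(kx)$ (this is the compatibility $g(Y \cdot x) = gY \cdot gx$ recalled in Section \ref{homogspaces}). By \eqref{eqlinearizacao1} we have $kX, kY \in \g_{kx}^\perp$ whenever $X, Y \in \g_x^\perp$, so
\[
\langle (DL_k)(X \cdot x), (DL_k)(Y \cdot x) \rangle_{kx} = \langle kX, kY \rangle = \langle X, Y \rangle = \langle X \cdot x, Y \cdot x \rangle_x,
\]
where the middle equality uses the $K$-invariance of the Cartan inner product $\langle \cdot, \cdot \rangle$ on $\g$.

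Finally, for the inequality, decompose an arbitrary $Y \in \g$ orthogonally as $Y = Y_{\g_x} + Y^\perp$ with $Y_{\g_x} \in \g_x$ and $Y^\perp \in \g_x^\perp$. Since the isotropy subalgebra acts trivially at $x$, we have $Y \cdot x = Y^\perp \cdot x$, and hence
\[
|Y \cdot x|_x = |Y^\perp \cdot x|_x = |Y^\perp| \leq \sqrt{|Y^\perp|^2 + |Y_{\g_x}|^2} = |Y|,
\]
with equality exactly when $Y_{\g_x} = 0$, i.e.\ $Y \in \g_x^\perp$. The only step with any subtlety is verifying that the formula defines a genuine (smooth) Riemannian metric across $K$, which reduces to the smooth dependence of the complement $\g_x^\perp$ on $x$; everything else is bookkeeping with the $K$-equivariance of the infinitesimal action.
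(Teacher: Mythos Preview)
Your proof is correct and follows the natural approach: well-definedness via the isomorphism $\g_x^\perp \to TK_x$, $K$-invariance from \eqref{eqlinearizacao1} together with the $K$-invariance of the Cartan inner product, and the inequality from the orthogonal splitting $Y = Y_{\g_x} + Y^\perp$. The paper itself does not give a proof here but simply refers to Proposition 3.1 of \cite{Patrao1}, whose argument is essentially the one you wrote; there is nothing substantive to contrast.
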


From now on, let us fix the previous metric in $K$. Note that, in general, this metric is different from the Borel metric used previously. First we define candidates for the stable and unstable bundles orthogonal to the tangent bundle of ${\cal M}$. The tangent space of the orbit
\[
{\cal M} = G_H^g u b
\]
is given by
\[
T {\cal M} = G_H^g (\g_H \cdot u b) \subset T K
\]
Consider the orthogonal decomposition
\begin{equation} \label{eqdecomposicaog_H}
\g =\g_H \oplus \n^-_H \oplus \n^+_H
\end{equation}
where $\g_H$ and $\n^\pm_H$ are $G_H$-invariant. {\em Define}
\[
V^\pm := G_H^g ( \n_H^\pm \cdot u b) \subset T K
\]
for $x \in {\cal M}$, and subspaces
\[
\v^\pm_x := \n^\pm_H \cap \g_x^\perp
\]

\begin{proposition}
\label{fibradosnormais}
\begin{enumerate}[(i)]
\item The tangent space of $K$ in ${\cal M}$ can be decomposed as the Whitney orthogonal sum,
\[
T K|_{\cal M} = T {\cal M} \oplus V^- \oplus V^+
\]
where $V^\pm$ are $G_H^g$-invariant differentiable vector sub-bundles of ${\cal M}$. And, in particular, $V^- \oplus V^+$ is a normal fiber of $T {\cal M}$.

\item For $x \in {\cal M}$, the map
\[
\v^\pm_x \to V^\pm_x \qquad  Y \mapsto Y \cdot x
\]
is a linear isomorphism and $k (\v_x) = \v_{k x}$ for $k \in K_H$.
\end{enumerate}
\end{proposition}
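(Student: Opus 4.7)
The plan is to prove $(ii)$ first and then deduce $(i)$, the key structural point being that at every $x \in {\cal M}$ the isotropy subalgebra splits compatibly with the $\ad(H)$-grading, i.e.\
\[
\g_x = (\g_x \cap \g_H) \oplus (\g_x \cap \n^-_H) \oplus (\g_x \cap \n^+_H),
\]
so that the orthogonal decomposition $\g = \g_H \oplus \n^-_H \oplus \n^+_H$ restricts to one of $\g_x^\perp$ and, via the isometry in Proposition \ref{normanatural}, transfers to an orthogonal splitting of each fiber of $T K|_{{\cal M}}$.

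For $(ii)$, I start from the isomorphism $\g_x^\perp \to T K_x$, $Y \mapsto Y \cdot x$, given by (\ref{normalhypeq1}), whose restriction to $\v_x^\pm = \n^\pm_H \cap \g_x^\perp$ is injective with image contained in $V_x^\pm = \n^\pm_H \cdot x$. Surjectivity onto $V_x^\pm$ is equivalent to the decomposition
\[
\n^\pm_H = (\n^\pm_H \cap \g_x) \oplus (\n^\pm_H \cap \g_x^\perp),
\]
because then every $Z \in \n^\pm_H$ splits as $Z_1 + Z_2$ with $Z_1 \in \g_x$ killed by the infinitesimal action, so $Z \cdot x = Z_2 \cdot x$ with $Z_2 \in \v_x^\pm$. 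I reduce to the case $x = u b$ using that ${\cal M} = K_H^g u b$: if $x = k_0 u b$ with $k_0 \in K_H^g \subset K \cap G_H$, then $\Ad(k_0)\n^\pm_H = \n^\pm_H$ by $G_H$-invariance, $\Ad(k_0)\g_{ub} = \g_x$ by equivariance of the isotropy, and $\Ad(k_0)\g_{ub}^\perp = \g_x^\perp$ by $K$-invariance of the inner product, so the decomposition transports from $u b$ to $x$. At $x = u b$, equations (\ref{eqperpisotropia1}) and (\ref{eqperpisotropia2}) give $\g_{ub} = \Ad(u)(\a \oplus \n)$ and $\g_{ub}^\perp = \Ad(u)(\m \oplus \n^-)$; since $u \in M_*$, $\Ad(u)$ permutes root spaces, so both $\Ad(u)\n$ and $\Ad(u)\n^-$ are sums of distinct root spaces, and since $\n^\pm_H$ is itself a sum of root spaces disjoint from $\m$, intersecting yields
\[
\n^\pm_H = (\n^\pm_H \cap \Ad(u)\n) \oplus (\n^\pm_H \cap \Ad(u)\n^-) = (\n^\pm_H \cap \g_{ub}) \oplus (\n^\pm_H \cap \g_{ub}^\perp).
\]
The equivariance $k(\v_x^\pm) = \v_{kx}^\pm$ for $k \in K_H$ then follows at once from $\Ad(k)\n^\pm_H = \n^\pm_H$ and $\Ad(k)\g_x^\perp = \g_{kx}^\perp$.

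For $(i)$, the same argument applied with $\g_H$ in place of $\n^\pm_H$ produces both a linear isomorphism $\g_H \cap \g_x^\perp \to T{\cal M}_x = \g_H \cdot x$ and the splitting $\g_H = (\g_H \cap \g_x) \oplus (\g_H \cap \g_x^\perp)$. Summing the three component splittings for $\g_H$, $\n^-_H$, $\n^+_H$ and using orthogonality of $\g = \g_H \oplus \n^-_H \oplus \n^+_H$, we obtain
\[
\g_x^\perp = (\g_H \cap \g_x^\perp) \oplus (\n^-_H \cap \g_x^\perp) \oplus (\n^+_H \cap \g_x^\perp)
\]
as an orthogonal direct sum of subspaces of $\g$, which via Proposition \ref{normanatural} becomes the fiberwise orthogonal splitting $TK_x = T{\cal M}_x \oplus V_x^- \oplus V_x^+$. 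The $G_H^g$-invariance of $V^\pm$ is built into its definition as a $G_H^g$-orbit, and smoothness follows from realizing $V^\pm$ as the homogeneous vector bundle over ${\cal M} = G_H^g/(G_H^g)_{ub}$ associated to the well-defined representation of the isotropy at $u b$ on the $G_H$-invariant subspace $\n^\pm_H$.

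The main obstacle is the compatibility identity $\n^\pm_H = (\n^\pm_H \cap \g_x) \oplus (\n^\pm_H \cap \g_x^\perp)$; everything else is a formal consequence of this identity together with $K$-equivariance. The hinge of the argument is that $u \in M_*$ permutes root spaces, so intersections of $\Ad(u)$-translates of sums of root spaces remain sums of root spaces, and the $\ad(H)$-grading is refined consistently through the isotropy.
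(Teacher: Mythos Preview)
Your proof is correct and follows essentially the same route as the paper's: both arguments hinge on the splitting $\n^\pm_H = (\n^\pm_H \cap \g_{ub}) \oplus (\n^\pm_H \cap \g_{ub}^\perp)$ obtained from the fact that $u \in M_*$ permutes root spaces, then transport this to arbitrary $x \in {\cal M}$ via $K_H^g$, repeat the argument with $\g_H$ in place of $\n^\pm_H$, and assemble the three pieces into the orthogonal decomposition of $\g_x^\perp$ that pushes forward to $TK_x$ by Proposition~\ref{normanatural}. The only cosmetic difference is that you justify the sub-bundle structure via the associated homogeneous bundle construction, whereas the paper defers this to the analogous result in \cite{Patrao1}.
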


\begin{proof}
The $G_H^g$-invariance is immediate from the definitions. The proof that $V^\pm$ aresub-bundles over ${\cal M} = G_H^g u b$ is similar to the proof of Proposition 3.2 of \cite{Patrao1}. From the  $G_H^g$-invariance of $\n^\pm_H$ it follows that
\begin{equation}
\label{fibradosnormaiseq0}
V^\pm_x = \{ (g \n_H^\pm \cdot g ub): \, g \in G_H^0, g ub = x \} = \n_H^\pm \cdot x
\end{equation}
To prove the Whitney sum, by the orthogonal decomposition
\[
\g = u (\m \oplus \n^-) \oplus u (\a \oplus \n)
\]
by the decomposition in root spaces of $\n^\pm_H$ and by equation (\ref{eqperpisotropia2}) then
\begin{eqnarray*}
\n^\pm_H & = & \left(\n^\pm_H \cap u (\m \oplus \n^-) \right) \oplus \left(\n^\pm_H \cap u (\a \oplus \n) \right) \nonumber\\
& = & (\n^\pm_H \cap \g^\perp_{u b}) \oplus (\n^\pm_H \cap \g_{u b})
\end{eqnarray*}
For $x \in {\cal M}$, then $x = k u b$ with $k \in K_H^g$. From equation (\ref{eqlinearizacao1}) then $k (\g_{u b}^\perp) = \g_x^\perp$ and $k  \g_{u b} = \g_x$. Since $K_H^g = K \cap G_H^g$ normalizes $\n^\pm_H$, taking $k$ in both sides of the previous decomposition we get
\begin{align}
\n^\pm_H & = (\n^\pm_H \cap \g_x^\perp) \oplus (\n^\pm_H \cap \g_x) \nonumber \\
& = \v^\pm_x \oplus (\n^\pm_H \cap \g_x)
\label{fibradosnormaiseq2}
\end{align}
From this and from equation (\ref{fibradosnormaiseq0}) then
\[
\v^\pm_x \to V^\pm_x \qquad Y \mapsto Y \cdot x
\]
is a linear isomorphism. Since $\g_H$ is also $G_H^g$-invariant, the same argument applies to $T{\cal M}$ to get $T{\cal M}_x = \g_H \cdot x$ and
\begin{equation}
\label{fibradosnormaiseq3}
\g_H = (\g_H \cap \g_x^\perp) \oplus (\g_H \cap \g_x)
\end{equation}
and
\begin{equation}
\label{fibradosnormaiseq4}
T{\cal M}_x = (\g_H \cap \g_x^\perp) \cdot x
\end{equation}
Combining the decompositions (\ref{eqdecomposicaog_H}), (\ref{fibradosnormaiseq2}), and (\ref{fibradosnormaiseq3}), we get the orthogonal sum
\[
\g_x^\perp = (\g_H \cap \g_x^\perp) \oplus \v_x^- \oplus \v_x^+
\]
The image of this decomposition by the isometry (\ref{normalhypeq1}) is the orthogonal sum
\[
T K_x = T{\cal M}_x \oplus V^-_x \oplus V^+_x
\]
Since $k \in K_H^g \subset K_H$ normalizes $\n^\pm_H$, from equation (\ref{eqlinearizacao1}) then $k \v_x = \v_{k x}$.
\end{proof}

The next result shows that each minimal Morse component ${\cal M} = {\rm fix}(H, u) $ of $g^t$ in $K$ is normally hyperbolic.

\begin{proposition} \label{propnormanatural-g}
The vector bundles $V^\pm$ are $g^t$-invariant and there are positive numbers $c$ and $\nu < \lambda$ such that
\begin{enumerate}[(i)]
\item  $|g^t v| \leq c\, {\rm e}^{-\lambda t} |v|$ for $v \in V^-$and $t \geq 0$.

\item  $|g^{-t} v| \leq c\, {\rm e}^{-\lambda t} |v|$ for $v \in V^+$and $t\geq 0$.

\item  $|g^t v| \leq c\, {\rm e}^{\nu |t|} |v|$ for $v \in T{\cal M}$ and $t \in \R$.
\end{enumerate}
\end{proposition}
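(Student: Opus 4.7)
The plan is to establish the three estimates by exploiting the commuting Jordan decomposition $g^t = e^t h^t u^t$, in which $e^t\in K$ acts by $\langle\cdot,\cdot\rangle$-isometries on $\g$ (by $K$-invariance of the Cartan inner product), $h^t$ decays or grows exponentially on $\n_H^\mp$ by Lemma~\ref{lemmadecaimentoexp}, and $u^t$ is unipotent so its adjoint action on $\g$ has polynomial growth in $|t|$, because $\Ad(u^t)=e^{t\,\ad(N)}$ with $\ad(N)$ nilpotent. The $g^t$-invariance of $V^\pm$ follows from Proposition~\ref{fibradosnormais}, since $g=g^1\in G_H^g$ and $G_H^g$ is a subgroup, so $g^t\in G_H^g$ for every $t\in\mathbb{T}$ (with $g^t\in G_H^0\subset G_H^g$ in the continuous case).

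For items (i) and (ii) I would write an arbitrary $v\in V^\pm_x$ as $v=Y\cdot x$ with $Y\in\v^\pm_x\subset\n^\pm_H$, so Proposition~\ref{normanatural} gives $|v|=|Y|$ and $|g^t v|\le|g^t Y|$. The $K$-invariance of the inner product, together with the commutativity of the Jordan factors, yields $|g^t Y|=|h^t u^t Y|$. Since $u^t\in G_H$ normalizes $\n^\pm_H$, one has $u^t Y\in\n^\pm_H$ with a polynomial bound $|u^t Y|\le p(|t|)|Y|$ for some fixed polynomial $p$. Applying Lemma~\ref{lemmadecaimentoexp} to $h^t u^t Y\in\n^-_H$ gives $|h^t u^t Y|\le e^{-\mu t}p(t)|Y|$ for $t\ge 0$, and symmetrically for $\n^+_H$ and $t\le 0$. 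Fixing any $\lambda\in(0,\mu)$ and letting $c:=\sup_{t\ge 0}e^{(\lambda-\mu)t}p(t)<\infty$ gives (i), and the analogous argument gives (ii). For (iii), I would pick $v=Z\cdot x\in T{\cal M}_x$ with $Z\in\g_H\cap\g_x^\perp$ and $|v|=|Z|$. Because $H$ centralizes $\g_H$, one has $h^t Z=Z$, so $g^t Z=e^t u^t Z$ and $|g^t v|\le|g^t Z|=|u^t Z|\le p(|t|)|v|$. Choosing $\nu\in(0,\lambda)$ and $c'\ge\sup_{t\in\R}p(|t|)e^{-\nu|t|}$ then yields the required bound with $\nu<\lambda$.

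The main obstacle is the polynomial perturbation introduced by $u^t$: it prevents the action on $\n^\pm_H$ from being a pure exponential contraction or expansion and forces polynomial, rather than isometric, growth along $T{\cal M}$. Both issues are resolved by choosing $\lambda$ strictly less than the minimal positive eigenvalue $\mu$ of $\ad(H)$, so that the polynomial factor is absorbed into $e^{(\mu-\lambda)t}$, and then choosing $\nu<\lambda$ small enough to absorb the polynomial growth along $T{\cal M}$; the rest is routine bookkeeping of constants.
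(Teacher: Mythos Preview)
Your proposal is correct and follows essentially the same approach as the paper: write $v=Y\cdot x$ with $Y$ in the appropriate subspace, use Proposition~\ref{normanatural} to pass from $|g^t v|$ to $|g^t Y|$, strip off the isometric $e^t$, bound the unipotent factor by a polynomial $p(t)$ via nilpotency of $\ad(N)$, and absorb $p(t)$ into the exponential. The only cosmetic difference is that the paper makes the explicit choices $\lambda=\mu/2$ and $\nu=\mu/4$, whereas you leave $\lambda\in(0,\mu)$ and $\nu\in(0,\lambda)$ as free parameters.
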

\begin{proof}
First note that 
\[
g^t = e^t h^t u^t
\]
where $g^t, e^t, h^t, u^t \in G_H^g$. We have that $V^\pm = \n_H^\pm \cdot {\cal M}$, that $G_H^g$ normalizes $\n_H^\pm$, and $G_H^g$ also leaves ${\cal M}$ invariant, so that $V^\pm$ is $g^t$-invariant. Let $v \in V^\pm$, then $v = Y \cdot x$ with $Y \in \v^\pm_x = n_H^\pm \cap \g_x^\perp$ and $x \in {\cal M}$, from Proposition \ref{normanatural}, $|v| = |Y|$ and
\[
|g^t v| = |g^t Y \cdot g^t x| \leq |g^t Y|
\]
where $g^t Y \in \n_H^\pm$. It is enough to show then the inequalities for $g^t$ restricted to $\n_H^\pm$. First we consider the case where $Y \in \n_H^-$, the next case is proven similarly. From Lemma \ref{lemmadecaimentoexp}, there is $\mu >0$ such that $|h^t Z| \leq \e^{-\mu t} |Z|$, for $t \geq 0$ and $Z \in \n_H^-$. Since $e^t \in K_H^0$ and the inner product is $K$-invariant, then
\[
|g^t Y| = |h^t u^t Y| \leq \e^{-\mu t} |u^t Y|
\]
where we used that $u^t \in G_H^g$, so $u^t Y \in \n^-_H$. Since $u^t = \exp(t N)$, for $N \in \g$ nilpotent, it follows that $u^t Y = \e^{t\ad(N)} Y$. From the triangle inequality then
\[
|u^t Y| = |\e^{t\ad(N)} Y| \leq \sum_{k \geq 0} \frac{|t^k|}{k!} \| \ad(N)^k \| |Y| = p(t) |Y|
\]
where $\| \cdot \|$ is the operator norm associated to the norm $|\cdot |$ in $\n_H^-$ and $p(t)$ is a polynomial, since $\ad(N)$ is nilpotent. Then
\[
|g^t Y| \leq \e^{-\mu t} p(t) |Y|
\]
Since $|g^t v| \leq |g^t Y|$ and $|v| = |Y|$ then for $v \in V^-$
\[
|g^t v| \leq \e^{-\mu t} p(t) |v|,\quad t \geq 0
\]
The case for $V^+$ is similar so that for $v \in V^+$
\[
|g^{-t} v| \leq \e^{-\mu t} p(t) |v|,\quad t \geq 0
\]
For $T {\cal M}$, note that $x \in {\cal M}$ and $g^t x = e^t u^t x$, and $g^t$ acts as $e^t u^t$ in $T {\cal M}$. From equation \ref{fibradosnormaiseq4} from Proposition \ref{fibradosnormais} a tangent vector $v \in T{\cal M}_x$ is $v = Y \cdot x$, for $Y \in \g_H \cap \g_x^\perp$. From Proposition \ref{normanatural} $|v| = |Y|$ and 
\[
|g^t v| = |e^t u^t Y \cdot e^t u^t x| \leq |e^t u^t Y| = |u^t Y| \leq p(t) |Y| = p(t) |v|
\]
where we used that $e^t \in K_H^g$ and the inequality for $|u^t Y|$ previously obtained.

Since $\e^{-\frac{\mu}{2} |t|} p(t) \to 0$ when $t \to \pm \infty$, then $\e^{-\frac{\mu}{2} |t|} p(t)$ is bounded by some $c_1 > 0$ and
\[
\e^{-\mu t} p(t) = \e^{-\frac{\mu}{2} t} \left( \e^{-\frac{\mu}{2} |t|} p(t) \right) \leq c_1  \e^{-\frac{\mu}{2} t}, \quad t \geq 0
\]
For the last case, since $\e^{ -\frac{\mu}{4} |t|} p(t) \to 0$ when $t \to \pm\infty$, then $\e^{ -\frac{\mu}{4} |t|} p(t)$ is is bounded by some $c_2 > 0$  and
\[
p(t) = \e^{\frac{\mu}{4} |t|} \left( \e^{ -\frac{\mu}{4} |t|} p(t) \right) \leq c_2 \e^{\frac{\mu}{4} |t|}, \quad t \in \R
\]
The items (i), (ii) and (iii) of the Proposition follow taking $\lambda = \frac{\mu}{2}$, $\nu = \frac{\mu}{4}$, and $c$ to be the maximum of $c_1$ and $c_2$.
\end{proof}

It follows that $V^\pm$ are the unstable/stable bundle of $g^t$.
By the main result of \cite{pugh}, we obtain a linearization of this flow in a neighborhood of each minimal Morse component ${\cal M}(g^t,u)$.

\begin{corollary}\label{corlinearizgt}
Let $V = V^- \oplus V^+$.
There exists a differentiable map $ V \to
K$ which takes the null section to
${\cal M}(g^t,u)$ and such that:

\begin{enumerate}[$(i)$]
\item Its restriction to some neighborhood of the null section $V_0$ inside $V$ is a $g^t$-equivariant diffeomorphism onto some neighborhood of ${\cal M}(g^t,u)$ inside $K$.

\item Its restrictions to $V^\pm$ are $g^t$-equivariant diffeomorphisms, respectively, onto the unstable/stable manifolds
$N^\pm_H {\cal M}(g^t,u)$.
\end{enumerate}
\end{corollary}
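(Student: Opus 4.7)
The plan is to apply the main result of \cite{pugh}, the Hirsch--Pugh--Shub linearization theorem for normally hyperbolic invariant manifolds. By Proposition \ref{propnormanatural-g}, each minimal Morse component ${\cal M} = {\cal M}(g^t,u)$ is a normally hyperbolic $g^t$-invariant submanifold of $K$ with stable bundle $V^-$ and unstable bundle $V^+$, so that theorem directly yields a differentiable map $\Phi : V \to K$ that identifies the null section with ${\cal M}$, restricts to a $g^t$-equivariant diffeomorphism from some neighborhood of the null section onto a neighborhood of ${\cal M}$ in $K$ (giving item $(i)$), and whose restrictions to $V^\pm$ are $g^t$-equivariant diffeomorphisms onto the globally defined stable and unstable manifolds of ${\cal M}$.

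It then suffices to identify these stable and unstable manifolds with $N_H^-{\cal M}$ and $N_H^+{\cal M}$, respectively. For the stable case I would argue as in Lemma \ref{lemmaestavel}$(iv)$, adapted to the general translation: for $n \in N_H^-$ and $x \in {\cal M}$ we have $g^t n x = (g^t n g^{-t}) g^t x$, and writing $g^t = e^t h^t u^t$, the elliptic factor $e^t$ acts on $\n_H^-$ by isometries, the unipotent factor $u^t$ acts with at most polynomial growth, and the hyperbolic factor $h^t$ contracts $\n_H^-$ exponentially by Lemma \ref{lemmadecaimentoexp}. Hence $g^t n g^{-t} \to 1$ in $N_H^-$ as $t \to \infty$, so $nx$ lies in the stable set of ${\cal M}$; the unstable inclusion $N_H^+ {\cal M} \subset$ unstable manifold is symmetric in backward time. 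Equality, rather than just inclusion, then follows by matching dimensions, since $\dim V^\pm = \dim \v_x^\pm = \dim \n_H^\pm - \dim(\n_H^\pm \cap \g_x)$ by Proposition \ref{fibradosnormais}, which equals the fiber dimension of $N_H^\pm {\cal M} \to {\cal M}$; alternatively one can invoke the uniqueness of the invariant manifold tangent to $V^\pm$ from \cite{pugh}.

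Concretely, the realization of $\Phi|_{V^\pm}$ can be taken to be the map $Y \cdot x \mapsto \exp(Y) x$ for $Y \in \v_x^\pm$: smoothness follows from the bundle structure, $g^t$-equivariance from $g^t \exp(Y) g^{-t} = \exp(g^t Y)$ (valid because $G_H^g$ normalizes $N_H^\pm$), and bijectivity onto $N_H^\pm {\cal M}$ from the decomposition $\n_H^\pm = \v_x^\pm \oplus (\n_H^\pm \cap \g_x)$ established in the proof of Proposition \ref{fibradosnormais}. The delicate point — and the main obstacle — is showing that $N_H^\pm {\cal M}$ exhausts the full (un)stable manifold produced abstractly by \cite{pugh}, which relies precisely on the combined Jordan-decomposition estimate above to guarantee that the contraction rate on $N_H^-$ under $g^t$ still dominates, and hence that no extra points arise in the stable set beyond those of $N_H^- {\cal M}$.
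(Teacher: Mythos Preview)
Your core approach---invoke the Pugh--Shub linearization theorem from \cite{pugh} applied to the normally hyperbolic data supplied by Proposition \ref{propnormanatural-g}---is exactly what the paper does, and the paper's proof is essentially just that one sentence. Where you diverge is in the amount of work you do afterward.

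The identification of the global stable/unstable manifolds with $N_H^\mp{\cal M}$ that you re-derive via the Jordan decomposition estimate is redundant: it was already established in Theorem \ref{propmorseflag}(ii), so the paper simply takes it as given when invoking \cite{pugh}. Your argument for it is correct (indeed it recapitulates the proof of Theorem \ref{propmorseflag}), just unnecessary at this point.

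Your proposed concrete realization $Y\cdot x \mapsto \exp(Y)x$ for $Y\in \v_x^\pm$, however, has a gap in the equivariance claim. The differential action gives $g^t(Y\cdot x) = g^tY\cdot g^tx$, but $g^tY$ need not lie in $\v^\pm_{g^tx} = \n_H^\pm \cap \g_{g^tx}^\perp$, because $g^t$ (not being in $K$ in general) does not preserve the orthogonal complements $\g_x^\perp$. Writing $g^tY = Y' + Y''$ with $Y'\in\v^\pm_{g^tx}$ and $Y''\in \n_H^\pm\cap\g_{g^tx}$, your map sends $g^t(Y\cdot x)$ to $\exp(Y')g^tx$, whereas $g^t\exp(Y)x = \exp(g^tY)g^tx$, and Baker--Campbell--Hausdorff does not guarantee $\exp(-g^tY)\exp(Y')\in G_{g^tx}$. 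Since this explicit map is not part of the statement and the abstract Pugh--Shub result already suffices, you can simply drop this paragraph.
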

\begin{proof}
It is enough to note that the action of $g^t$ on $V$ is given by the restriction of the differential of the action of $g^t$ on $K$ and also that the equivariance property is equivalent to the conjugation property of \cite{pugh}.
\end{proof}


\begin{thebibliography}{99}

\bibitem{ammar} Ammar, G and Martin, C.: \textit{The Geometry of Matrix Eigenvalues Methods}. Acta Appl. Math., \textbf{5} (1986), 239-278.


\bibitem{ayala}  V.\ Ayala, F.\ Colonius, W.\ Kliemann: \textit{Dynamical characterization of the Lyapunov form of matrices}, Linear Algebra and its Applications, \textbf{402} (2005), 272-290.

\bibitem{batterson} S.\ Batterson: \textit{Structurally Stable Grassmann
Transformations}, Trans.\ Amer.\ Math.\ Soc., \textbf{231} (1977),
385-404.

\bibitem{conley} Conley, C.: \textit{The gradient structure of a flow: I}.
Ergodic Theory Dynam. Systems, \textbf{8} (1988), 11-26.

\bibitem{dkv} Duistermat, JJ; Kolk, J.A.C.; Varadarajan, V.S.: \textit{ Functions and flows and oscilatory integrals on flag manifolds}.
Compos Math. Systems, \textbf{49} (1983), 309-398.

\bibitem{ferraiol} Ferraiol, T.; Patr\~{a}o, M.; Seco, L.: \textit{Jordan
decomposition and dynamics on flag manifolds}. Discrete Contin.\ Dyn.\ Syst. A \textbf{26} (2010), 923-947.

\bibitem{kleinsteuber}
U.\ Helmke, M.\ Kleinsteuber: \textit{A differential equation for
diagonalizing complex semi-simple Lie algebra elements}, Systems \&
Control Letters \textbf{59} (2010).

\bibitem{hermann} R.\ Hermann and C.\ Martin: \textit{Lie and Morse Theory
for Periodic Orbits of Vector Fields and Matrix Ricatti equations,
I-II.}. Math.\ Systems Theory \textbf{15} (1982) 277-284 -
\textbf{16} (1983), 307-317.

\bibitem{neeb} Hilgert, J.; Neeb, K.-H.: \
{\em Structure and Geometry of Lie Groups}. SMM,
Springer-Verlag, New York, (2012).

\bibitem{knapp} Knapp, A.W.: \
{\em Lie Groups Beyond a Introduction 2nd ed.},Progress
in Mathematics, v. 140, Birkh\"auser, Boston, (2002).

\bibitem{Patrao1} Patrão, M.; Seco L.: \
\textit{The minimal Morse components of translations on flags manifolds are normally hyperbolic}. JDDE.
\textbf{30}, (2017), 807-821.

\bibitem{psm}  M.\ Patr\~{a}o and L.A.B.\ San Martin: \textit{Chain
recurrence of flows and semiflows on fiber bundles}. Discrete
Contin.\ Dyn.\ Syst., \textbf{17} (2007), 113-139.

\bibitem{pss} M.\ Patr\~{a}o, L.A.B.\ San Martin and L.\ Seco:
\textit{Conley index and stable sets for flows on flag bundles}, Dyn.\ Syst.\,
\textbf{24} (2009), 249-276.

\bibitem{pugh} Pugh, C.; Shub M.: \textit{Linearization of normally hyperbolic diffeomorphisms and flows}. Inventiones Mathematicae \textbf{10} (1970), 187-198.

\bibitem{sel}  Selgrade, J.: \textit{Isolate invariant sets for flows on
vector bundles}. Trans. Amer. Math. Soc., \textbf{203} (1975), 359-390.

\bibitem{steenrod} Steenrod, N.: \
{\em The Topology of Fiber Bundles}, Princeton University Press, Princeton, (1951).

\bibitem{shub}
M.\ Shub and A.\ Vasquez: \textit{Some linearly induced Morse-Smale systems, the QR algorithm and the Toda lattice}, Contemp.\ Math.,  \textbf{64} (1987), 181-194.





\end{thebibliography}
\end{document}